\newtheorem{theorem}{Theorem}[section]
\newtheorem{lemma}[theorem]{Lemma}
\newtheorem{proposition}[theorem]{Proposition}
\newtheorem{corollary}[theorem]{Corollary}
\newtheorem{remark}[theorem]{Remark}
\newtheorem{question}{Question}
\newtheorem{definition}[theorem]{Definition}
\newtheorem{example}{Example}
\newcommand{\2}{NONEMPTY}
\newcommand{\1}{EMPTY}
\newcommand{\kare}{$\square$}
\begin{document}
	\title{Noetherian $\pi$-bases and  Telg{\'a}rsky's Conjecture}
	\author{Servet Soyarslan$^{(a)}$\footnote{Corresponding author. \newline E-mail addresses: servet.soyarslan@gmail.com (S. Soyarslan),  osul@metu.edu.tr(S. \"Onal).} , S\"uleyman \"Onal$^{(b)}$ }
	
	\maketitle	
	
	{\scriptsize a. Independent researcher, Ankara, Türkiye.
		
		 b. Middle East Technical University, Department of Mathematics, 06531 Ankara, Türkiye.}

\begin{abstract}
 We investigate  Noetherian families and show that every topological space has a Noetherian $\pi$-base. We prove that if a topological space has  some special Noetherian $\pi$-bases, then \2 has a 2-tactic in the Banach-Mazur game on a space $X$, denoted as $BM(X)$, whenever \2 has a winning strategy in $BM(X)$. 
 This result encompasses an important theorem of Galvin in this context and is related to Telg{\'a}rsky's  conjecture on this subject. One of our examples is that  any space $X$  with $\pi w(X)\leq \omega_1$ has this special Noetherian $\pi$-base.  We pose some questions about this topic.
\end{abstract}

{\scriptsize \textbf{Keywords}: Topological games, Banach-Mazur game, 2-tactic, winning strategy, Noetherian bases, Noetherian.}
	
	{\scriptsize\textbf{MSC:} Primary 91A44 and  54D70}

\section{Introduction}

This article  consists of some  results about Noetherian families and \2's strategies in the Banach Mazur game, along with establishing connections between them. 

Through the examination of Noetherian families in topological spaces,   we prove that every topological space has a Noetherian $\pi$-base. Because this is a highly general result, it's possible that someone has previously proven it. Nevertheless, we did not encounter this result elsewhere.

We represent the Noetherian $\pi$-base of a topological space with a matrix-like table, which we call the Noetherian table. By using a Noetherian $\pi$-base which has a "special" Noetherian table, we investigate \2's 2-tactic in $BM(X)$.

Historically,  
Telg{\'a}rsky mentioned that it is an unsettled question (posed by Debs) whether a winning
strategy of \2 in $BM(X)$ can be reduced to a   2-tactic.  He  also conjectured that
a $(k+1)$-tactic of \2  cannot be reduced, in general, to a  $k$-tactic (\cite{telgarsky1987topological}, p. 236).

 It is known that if \2 has a winning strategy in  $BM(X)$ where $X$ has a $\pi$-base $\mathcal{B}$ with the following (*)-property, then \2 has a   2-tactic (this is Galvin's theorem (unpublished), see Theorem 37 in \cite{bartoszynski1993covering}, for the  proof see  Theorem 2.4 in \cite{brian2021telgarsky}).  

(*)-property: 
For any $B\in \mathcal{B}$  there exists a cellular family 
$\mathcal{A}_B\subseteq \tau(B)$  such that $|\mathcal{A}_B|\geq  |\{V \in\mathcal{B}: B \subseteq V \}|$.

(If any $\pi$-base $\mathcal{B}$  satisfies  the (*)-property, then we call $\mathcal{B}$   \textit{a Galvin $\pi$-base}.)

To provide a proof of Galvin's theorem,  in Theorem 2.4  of \cite{brian2021telgarsky}  the authors utilize  the following (*')-property  instead of the (*)-property.   

(*')-property:  For any $O\in \tau^*(X)$, there exists a $\pi$-base \(\mathcal{B}_O\) for the subspace \( O \) such that for all \( W \in \tau^*(O) \), there exists a  disjoint family $\mathcal{A}\subseteq \tau^*(O)$ satisfying $|\{ B \in \mathcal{B}_O : W \subseteq B \}|\leq |\mathcal{A}|.$

In the comment after that Theorem
2.4 on page 6 in \cite{brian2021telgarsky}, the authors said that  (*')-property can be weakened slightly and the weak version (the weak version is (ii) of our Theorem  \ref{ne}) can be utilized in their Theorem  2.4  instead of (*')-property.  In Theorem \ref{ne} of this paper, we establish the equivalence of the properties (*),  (*') and (ii) for any space X. Consequently, Theorem 2.4 and the subsequent comment in \cite{brian2021telgarsky} are shown to be equivalent to Galvin's theorem. 

Our main theorem (see Theorem \ref{Teorem winning ten 2-tactige ana sonuc 2})  states that if \2 has a winning strategy  in the Banach-Mazur game on a space $X$ which has a  Noetherian $\pi$-base with "special" Noetherian table, then \2 has a 2-tactic in this game. We show that  Galvin's this theorem can be given as a corollary of our main theorem (see Corollary \ref{vin}). 

We think that the scope of our main theorem is possible broader than Galvin's theorem (and so Theorem 2.4 in \cite{brian2021telgarsky}). 
Because   for any $B\in \mathcal{B}$ Galvin's theorem hypothesizes that $B$ has to have as many disjoint open subsets as its open supersets, while our main theorem $B$ has to have as many disjoint open subsets as some (not all)  of  its open supersets.
 Because our theorem uses a Noetherian table. 
  $B$ is in this Noetherian table and  it is sufficient that if    $B$ has as many disjoint open subsets as  some of open supersets which are in the upper-left side of   $B$ in the Noetherian table. Note that for Galvin's theorem,  $B$ has as many disjoint open subsets as  open supersets which are in both side (upper-left and upper-right sides) of   $B$ in the Noetherian table. We have provided a base for the real numbers that reveals this difference. Namely, the real numbers has a $\pi$-base $\mathcal{B}$ such that $\mathcal{B}$ is not a Galvin base  and $\mathcal{B}$ has the conditions of our main theorem (see Example \ref{bitsin}).     However, the question  "Is there any space $X$ having a $\pi$-base satisfying conditions of our main theorem, but $X$  has no Galvin $\pi$-base?" remains open 
 (see  Question \ref{Galvin}).

A space $X$ is called quasi-regular  if $X$ is Hausdorff and, for
every nonempty open $U\subseteq X$, there is a nonempty open $V\subseteq X$ satisfying $\overline{V}\subseteq U$. Clearly, every $T_3$ space is quasi-regular. In \cite{brian2021telgarsky}, the authors show that  existence of Galvin $\pi$-bases in  quasi-regular spaces equivalent the set theoretic statement $\nabla$ (see Theorem 2.10 and 2.4 in \cite{brian2021telgarsky}). So, they proved that under some set theoretic assumption every $T_3$-space $X$ has  a Galvin $\pi$-base and so, \2 has a  2-tactic if \2 has a winning strategy in $BM(X)$. But this does not mean that we can say these things about \2 only when the Galvin $\pi$-basis exists. We asked a question about this (see Question \ref{de}). In other words, such research on the Telg{\'a}rsky's Conjecture  (even for quasi-regular spaces) is not yet finished. They noted that the “quasi-regular” hypothesis in Theorem 2.10 (in \cite{brian2021telgarsky}) cannot be omitted. They showed that if $X$ is quasi-regular space with $\pi w(X)\leq \omega_1$ and \2 has a winning strategy  in $BM(X)$, then  \2 has a 2-tactic  in $BM(X)$ (see Remark 2.9 in \cite{brian2021telgarsky}). We proved this result for all topological spaces not just quasi-regular spaces (see Theorem \ref{C:2}).

   Assume that no regular limit cardinal exists, and let $X$ denote either a linearly ordered space or, more generally, a generalized ordered space; we show that if \2 has a winning strategy in $BM(X)$, then \2 also has a 2-tactic in $BM(X)$.  
 
\vspace{2mm}\textbf{Notations and Terminology}\vspace{2mm}

In this paper, for a topological space, unless otherwise mentioned, we do not assume any separation axioms.
The family of nonempty elements of the topology on a space $X$ is denoted by $\tau^*(X)$.
 $pur(\mathcal{A})$ is the set $\{U\in \mathcal{A}: U$ has no isolated points and any nonempty open subset $W$ of $U$ has a cellular family of cardinality $\omega\}$ for any $\mathcal{A}\subseteq \tau^*(X)$.
 Any base or $\pi$-base, in this article for a topological space $X$, does not contain the  empty-set.  For definition of \textit{Galvin $\pi$-base}, see the  introduction. $\mathsf{CH}$ and $\mathsf{GCH}$ stand for continuum hypothesis and generalized continuum hypothesis, respectively. The first infinite ordinal is denoted by $\omega$ and the first uncountable cardinal is denoted by $\omega_1$.  Where $X$ is a topological space, we let $w(X)$,  $\pi w(X)$, $d(X)$ denote weight, $\pi$-weight and density of $X$, respectively. A cellular family of a topological space $X$ is a  disjoint subfamily of $\tau^*(X)$. The cellularity  of a space $X$, denoted $c(X)$, is defined as  $c(X)=\sup\{|\mathcal{A}|:\mathcal{A}$ is a cellular family in $X\} $.  
    The Souslin  number   of a space $X$, denoted $S(X)$, is defined as  $S(X)=\min\{\kappa: X$ has no cellular family of size $ \kappa \}$. 
   When $A$ is a subset of a space X, we let $\overline{A}$ denote the closure  of $A$ in $X$.      
   Where $A$ is a set, we let $\mathscr{P}(A)$, $|A|$ denote the power set and cardinality of $A$, respectively. If we write a set $A$ as $A=\{a_i:i\in \alpha\}$ where $\alpha$ is a  cardinal  or ordinal number, then we always suppose that  this matching is one to one, i.e., $a_i\neq a_j$ if $i\neq j$. The rest of  terminology used in    this paper is standard and can be found in  \cite{engelking1989general}.

\vspace{2mm}\textbf{The banach Mazur game and the  strategies for \2}\vspace{2mm}

We think a topological space $X$ and two players.  The two players  are called different names by some authors, in this paper,  we prefer to call them \1 and \2. In the round 1, \1, who always goes first, chooses a  $U_0\in \tau^*(X)$. Then, \2 chooses a nonempty   open subset  $V_0\subseteq U_0$. Thus, the  round 1 is completed. In  any further round, where $\omega> n>0$, \1  chooses a nonempty   open subset $U_n\subseteq V_{n-1}$ and \2 responds with a nonempty   open subset  $V_n\subseteq U_n$. Thus,  the sequence, which is called \textit{a play}, $(U_0, V_0, U_1, V_1, \ldots,U_n, V_n,\ldots)$ is formed.  If $\bigcap_{i\in\omega} U_i=\emptyset$,  then \1 wins this play. If not, \2 wins.  The family consisting of all these kinds of plays is called the \textit{Banach-Mazur game}  and denoted by $BM(X)$.

A finite sequence $(U_0,V_0,U_1,V_1,\ldots, U_n, V_n)$  consisting of legal moves in  $BM(X)$ is called   $n$-\textit{play part} or just \textit{a play part} in $BM(X)$. A finite sequence $(U_0,V_0,U_1,V_1,\ldots, U_{n-1},V_{n-1},U_n)$  consisting of legal moves in  $BM(X)$ is called   $n$-\textit{added play part} or just \textit{an added play part} in $BM(X)$. 

\textit{A winning strategy} for \2 in $BM(X)$ can be thought of as  a function $\sigma$ which  determines how to make a move  in  each round of a play and produces a win for \2. More clearly, the domain of $\sigma$ is this set:  $\{(U_0, V_0, U_1, V_1, \ldots,U_n):(U_0, V_0, U_1, V_1, \ldots,U_n)$ is an added play part in $BM(X)\}$ and range of $\sigma$ is the set which consists of all possible moves of \2.   If in any play \2 uses this wining strategy $\sigma$ this play will be as follows: $$(U_0,V_0,U_1,V_1,U_2,V_2,U_3,V_3,\ldots)$$  in this play  $\sigma((U_0,V_0,\ldots,U_i))=V_i$ for all $i\in \omega$ and
 $\bigcap_{i\in \omega}U_i\neq \emptyset$. 

NOTE THAT many authors use the term "winning k-tactic" TO DISTINGUISH strategies that guarantee a win from those that do not. HOWEVER, FOR BREVITY, we will SIMPLY USE k-tactic throughout this paper, as we EXCLUSIVELY CONSIDER strategies that lead to victory for Player 2.

A  $k$-\textit{tactic}  for \2 similarly is a  function   that produces a win for \2 and it is defined by   using the  previous $k$ moves of \1. A  1-tactic is sometimes called \textit{a stationary strategy}  (Most of the  authors use the term "winning $k$-\textit{tactic}"  rather than  "$k$-\textit{tactic}". Because they use the term "$k$-\textit{tactic}" for the strategy that doesn't require a win for a player. In this article, for brevity, we use the term  "$k$-\textit{tactic}" instead of "winning $k$-\textit{tactic}" because we do not need the strategy that doesn't require a win).

For more information about the  Banach-Mazur game and some other topological games, see   \cite{telgarsky1987topological} or \cite{galvin1986stationary},  for recent results see \cite{onal2020strategies}.

\section{Some  properties of   Noetherian families}

The concept of Noetherian family is known. The rank of Noetherian families, which is similar to rank of well founded relations in set theory,  was defined  in \cite{onal2020strategies}. These concepts   were used   in \cite{onal2020strategies} to get some results about topological games. In this paper, we use some properties of Noetherian families and,
 especially, Noetherian $\pi$-bases. Seemingly, Noetherian $\pi$-bases and their ranks are useful for topological games so we think it might be good to do some study on them. In this section, we define the \textit{Noetherian table} concept and investigate some properties of Noetherian $\pi$-bases and their ranks.

\begin{definition}\label{Noetherian} A family  $\mathcal{A}\neq \emptyset$ is called \textbf{Noetherian} if $\emptyset\notin \mathcal{A}$ and every strictly increasing sequence  $A_1\subsetneqq A_2 \subsetneqq A_3\subsetneqq \ldots$ of elements of $\mathcal{A}$ is finite.
	
	It is easy to check that $\mathcal{A}$ is Noetherian iff for all nonempty $\mathcal{B}\subseteq \mathcal{A}$ and for all $B\in \mathcal{B}$ there exists a $\subseteq$-maximal element $B^*$ of $\mathcal{B}$ such that $B\subseteq B^*$.   \begin{flushright}
	\kare
	\end{flushright}
\end{definition}

The following definition is meaningful because any Noetherian family $\mathcal{A}$ can be written of the form $\mathcal{A}=\bigcup_{i<\alpha} \mathcal{A}_i$ where $\mathcal{A}_i$  consists of
$\subseteq$-maximal elements of  $\mathcal{A}-\bigcup_{j<i} \mathcal{A}_i$ for all $i<\alpha$.

\begin{definition} \label{Definition Rank of Noetherian families} \cite{onal2020strategies}
	Let  $\mathcal{A}$ be a Noetherian family. We take any ordinal $i$ and  define $\mathcal{A}_i$ as the family  of $\subseteq$-maximal elements of $\mathcal{A}-\bigcup_{j<i} \mathcal{A}_j$ i.e., $\mathcal{A}_i=\{A\in \mathcal{A}-\bigcup_{j<i} \mathcal{A}_j : \forall B\in \mathcal{A}-\bigcup_{j<i} \mathcal{A}_j(A\subseteq B\rightarrow B=A) \}$, where $\mathcal{A}_j$ is already defined  for all $j<i$ in the same way.  
	Say the minimum element of the class   $\{\alpha:\mathcal{A}=\bigcup_{i<\alpha} \mathcal{A}_i\}$ of ordinals is $\alpha^*$ and so,  we have $\mathcal{A}=\bigcup_{i<\alpha^*}\mathcal{A}_i$. (Note that the family $\{\mathcal{A}_i:i<\alpha^*\}$ is disjoint and  $\mathcal{A}_i\neq \emptyset$ for all $i<\alpha^*$) Then, we define the following.
	
	1) The ordinal $\alpha^*$ is called \textbf{rank of $\mathcal{A}$} and denoted by \textbf{rank$(\mathcal{A})$}.
	
	2) What we mean with \textbf{
		Noetherian union of $\mathcal{A}$} is	$\bigcup_{i<rank(\mathcal{A})}\mathcal{A}_i$.
	
	3) We define a  function, \textbf{ rank function of  $\mathcal{A}$},   $r$ from $\mathcal{A}$ to $rank(\mathcal{A})$ such that $r(A)=i$ iff $A\in  \mathcal{A}_i$ for every $A\in \mathcal{A}$. The ordinal  $r(A)$ is called \textbf{level (or rank) of A in  $\mathcal{A}$}.  \begin{flushright}
		\kare
	\end{flushright}
	
\end{definition}

Let $\mathcal{A}$  be a Noetherian family and $A,B\in \mathcal{A}$.
Note that if $A\subsetneqq B$, then $r(B)<r(A)$.

\begin{lemma}\label{L: N}
	Let $\mathcal{A}$ be a Noetherian family. For any $A\in \mathcal{A}$, if  $r(A)=i>0$, 
then for each  $0\leq j<i$ there exists an $A_j\in \mathcal{A}$ such that  $r(A_j)=j$  and $A\subsetneqq A_j$. 

\end{lemma}

\begin{proof}
 Take any $j$ where $0\leq j<i$. So, $A\in \mathcal{A}-\bigcup_{m<i}\mathcal{A}_m\subseteq\mathcal{A}-\bigcup_{m<j}\mathcal{A}_m$. Thus, $A\in \mathcal{A}-\bigcup_{m<j}\mathcal{A}_m$.  Because $\mathcal{A}$ is Noetherian, there exists a $\subseteq$-maximal element $A_j$ of $\mathcal{A}-\bigcup_{m<j}\mathcal{A}_m$ such that $A\subseteq A_j$. Thus, $A_j\in \mathcal{A}_j$ and $r(A_j)=j$. Because $A\in \mathcal{A}_i$ and $\mathcal{A}_i\cap \mathcal{A}_j=\emptyset$, $A\neq A_j$. Therefore, $r(A_j)=j$  and $A\subsetneqq A_j$. 

\end{proof}

Let $\mathcal{A}$  be a Noetherian family. 
Take any $A,B\in \mathcal{A}$. Suppose  $A\subsetneqq B$. Then, we know that $r(B)<r(A)$. Suppose $r(B)+1<r(A)$. For any ordinal $\alpha$ where $r(B)<\alpha<r(A)$. It is possible that there is no $C_\alpha$ such that $A\subsetneqq C_{\alpha} \subsetneqq B$ and $r(C_\alpha)=\alpha$. Here is an easy example.

\begin{example}
 $\mathcal{A}=\{\{0,1,2\},\{1,2,3\},\{0,1\},\{1\}\}$. Say $A=\{1\}$ and $B=\{1,2,3\}$. Then, $r(A)=2>0=r(B)$ there is no $C\in \mathcal{A}$ such that  $r(C)=1$ and $A\subsetneqq C\subsetneqq B$.
\begin{flushright}
	\kare
\end{flushright}	
\end{example}

 \begin{remark}	
	
Let $\mathcal{A}$  be a Noetherian family, $\theta<\gamma <rank(\mathcal{A})$
	 ordinals and $A\in \mathcal{A}$ with $r(A)=\gamma$. For any given  finite sequence of ordinals $(\alpha_i)$ such that $\alpha_0=\gamma> \alpha_1>\alpha_2>\ldots>\alpha_n=\theta$ where $n\in \omega-\{0\}$, by repeatedly applying  Lemma \ref{L: N},  we can find $A_1, A_2, \ldots ,A_n\in \mathcal{A}$ such that  $A=A_0\subsetneqq A_1\subsetneqq A_2\subsetneqq\ldots\subsetneqq A_n$ and $r(A_j)=\alpha_j$ where $0\leq j\leq n$. However, form the example above,   if  $B\in \mathcal{A}$ with $r(B)=\theta$ is also  given at first, then  we may not find them. 
\begin{flushright}
	\kare
\end{flushright}
 	
\end{remark}

\begin{theorem}\label{t:2}

		Let $\mathcal{A}$ be a Noetherian family.
		 If  $\emptyset\neq \mathcal{B}\subseteq \mathcal{A}$, then $\mathcal{B}$ is Noetherian,   $rank(\mathcal{B})\leq rank(\mathcal{A})$ and for any $B\in \mathcal{B}$, the level of $B$ in $\mathcal{B}$ does not exceed the  level of $B$ in $\mathcal{A}$.

\end{theorem}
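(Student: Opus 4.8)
The plan is to dispatch the three assertions in increasing order of difficulty, reducing everything to the monotonicity fact recorded just after Definition \ref{Definition Rank of Noetherian families}, namely that $A\subsetneqq B$ forces $r(B)<r(A)$. Throughout I would write $r_{\mathcal{A}},\mathcal{A}_i$ for the rank function and strata of $\mathcal{A}$, and $r_{\mathcal{B}},\mathcal{B}_i$ for those of $\mathcal{B}$.

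First I would check that $\mathcal{B}$ is Noetherian. Since $\mathcal{B}\subseteq\mathcal{A}$ and $\emptyset\notin\mathcal{A}$ we have $\emptyset\notin\mathcal{B}$, and $\mathcal{B}\neq\emptyset$ by hypothesis; moreover every strictly increasing sequence of elements of $\mathcal{B}$ is, in particular, a strictly increasing sequence of elements of $\mathcal{A}$, hence finite. Thus $r_{\mathcal{B}}$ and the strata $\mathcal{B}_i$ are well defined, so that the remaining statements make sense.

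The heart of the proof is the level statement, which I would establish by transfinite induction on $i=r_{\mathcal{A}}(B)$: for every $B\in\mathcal{B}$, if $r_{\mathcal{A}}(B)=i$ then $r_{\mathcal{B}}(B)\leq i$. Fix such a $B$ and assume the claim for all elements of $\mathcal{B}$ of strictly smaller $\mathcal{A}$-level. If $B\in\bigcup_{j<i}\mathcal{B}_j$ there is nothing to prove, so suppose $B\notin\bigcup_{j<i}\mathcal{B}_j$; I will show $B$ is a $\subseteq$-maximal element of $\mathcal{B}-\bigcup_{j<i}\mathcal{B}_j$, whence $B\in\mathcal{B}_i$ and $r_{\mathcal{B}}(B)=i$. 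Indeed, if some $B'\in\mathcal{B}-\bigcup_{j<i}\mathcal{B}_j$ satisfied $B\subsetneqq B'$, then $B'\in\mathcal{A}$ together with $B\subsetneqq B'$ gives $r_{\mathcal{A}}(B')<r_{\mathcal{A}}(B)=i$ by the monotonicity fact, so by the induction hypothesis $r_{\mathcal{B}}(B')\leq r_{\mathcal{A}}(B')<i$, i.e. $B'\in\bigcup_{j<i}\mathcal{B}_j$, contradicting the choice of $B'$. The only point requiring care is that the proper supersets of $B$ available inside $\mathcal{B}$ form a subfamily of those available inside $\mathcal{A}$; this containment of upper sets is exactly what lets me transfer the $\mathcal{A}$-bound to $\mathcal{B}$, and it is the real content of the inequality, so I expect the main (if modest) obstacle to be setting up the induction correctly and invoking the hypothesis only for proper supersets while keeping the two rank functions cleanly separated.

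Finally, the rank inequality follows formally: every $B\in\mathcal{B}$ lies in $\mathcal{B}_{r_{\mathcal{B}}(B)}$ with $r_{\mathcal{B}}(B)\leq r_{\mathcal{A}}(B)<rank(\mathcal{A})$, so $\mathcal{B}=\bigcup_{i<rank(\mathcal{A})}\mathcal{B}_i$, and the minimality of $rank(\mathcal{B})$ in the class $\{\alpha:\mathcal{B}=\bigcup_{i<\alpha}\mathcal{B}_i\}$ yields $rank(\mathcal{B})\leq rank(\mathcal{A})$. This completes the three assertions.
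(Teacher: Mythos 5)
Your proof is correct, but its mechanics differ from the paper's in two notable ways, even though both arguments prove the level comparison first by transfinite induction and then extract the rank inequality. The paper inducts on the level $j$ of $B$ in the subfamily $\mathcal{B}$, and its inductive step hinges on Lemma \ref{L: N}: assuming $i<j$ where $B\in\mathcal{A}_i$, it invokes that lemma to produce $B^*\in\mathcal{B}_i$ with $B\subsetneqq B^*$, and then plays the induction hypothesis (giving $i\leq i^*$ for the $\mathcal{A}$-level $i^*$ of $B^*$) against the monotonicity fact (giving $i^*<i$) to reach a contradiction. You induct instead on the level of $B$ in the ambient family $\mathcal{A}$, and your inductive step never needs Lemma \ref{L: N}: you show directly from the definition of the strata that $B$ is $\subseteq$-maximal in $\mathcal{B}-\bigcup_{j<i}\mathcal{B}_j$, because any proper superset $B'\in\mathcal{B}$ has smaller $\mathcal{A}$-level by monotonicity and hence lies in $\bigcup_{j<i}\mathcal{B}_j$ by the induction hypothesis, so $B\in\mathcal{B}_i$. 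This makes your argument more self-contained (monotonicity is the only input), and it handles the base case $i=0$ and limit stages uniformly; the paper's version, by contrast, showcases Lemma \ref{L: N}, which it reuses elsewhere. The endgames also differ: the paper deduces $rank(\mathcal{B})\leq rank(\mathcal{A})$ by a separate contradiction argument (a nonempty stratum $\mathcal{B}_{\alpha}$ with $\alpha=rank(\mathcal{A})$ would put some $B$ in a stratum $\mathcal{A}_i$ with $\alpha\leq i$, which is impossible), whereas you obtain it formally from the level inequality, since it yields $\mathcal{B}=\bigcup_{i<rank(\mathcal{A})}\mathcal{B}_i$ and $rank(\mathcal{B})$ is minimal among ordinals with this property. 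Both derivations are sound; yours is arguably the cleaner dual of the paper's.
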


\begin{proof}
 Clearly, $\mathcal{B}$ is Noetherian. 
Say Noetherian union of $\mathcal{B}$ and $\mathcal{A}$  are  $\bigcup_{j<rank(\mathcal{B})} \mathcal{B}_j$ and  $\bigcup_{i<rank(\mathcal{A})} \mathcal{A}_i$, respectively.

\textbf{Claim.}  For all $j<rank(\mathcal{B})$ and for all $B\in \mathcal{B}_j$  if $B\in \mathcal{A}_i$ then $j\leq i$.

\textbf{Proof of the Claim.} We use transfinite induction. The case $j=0$ is obvious. Suppose  that $j<rank(\mathcal{B})$ and the claim is true  for any $k$ where $0\leq k< j$. Now, for the case $j$, take any $B\in \mathcal{B}_j$. Say  $B\in \mathcal{A}_i$.  Assume the opposite, say $i<j$. Then, from Lemma \ref{L: N}, there is $B^*\in  \mathcal{B}_i$ such that $B\subsetneqq B^*$. Say $B^*\in  \mathcal{A}_{i^*}$, so, from  the induction hypothesis,  $i\leq i^*$. Because   $B\subsetneqq B^*$, the level of $B$  in $ \mathcal{A}$, which is $i$, must be greater than  level of $B^*$  in $ \mathcal{A}$, which is $i^*$. So, $i>i^*$.  But  $i\leq i^*$. This is a contradiction. \kare

Assume  $rank(\mathcal{B})> rank(\mathcal{A})=\alpha$. Then, $\mathcal{B}_\alpha\neq \emptyset$. Take any $B\in \mathcal{B}_\alpha$ and  say $B\in \mathcal{A}_i$. So, $\mathcal{A}_i\neq \emptyset$ and from the claim, $rank(\mathcal{A})=\alpha\leq i$. This  contradicts  definition of $rank(\mathcal{A})$.

From the claim,  for any $B\in \mathcal{B}$, the level of $B$ in $\mathcal{B}$ does not exceed the  level of $B$ in $\mathcal{A}$.

\end{proof}

Let us be given a Noetherian base or  $\pi$-base   $\mathcal{B}$ of a space $X$. Can we reduce the rank? More clearly, can we find any base or $\pi$-base $\mathcal{B}'\subseteq \mathcal{B}$ such that $rank(\mathcal{B}')\leq |rank(\mathcal{B})|$ where $rank(\mathcal{B})$ is not a cardinal?  We give  a partial answer for this question in the following proposition and ask other part of it in  Section \ref{Ack sorular}.

\begin{theorem}\label{Prop: } If $\mathcal{B}$ is a Noetherian  $\pi$-base of a space $X$, then there exists a Noetherian $\pi$-base $\mathcal{B}'\subseteq \mathcal{B}$ such that $rank(\mathcal{B}')\leq \kappa$ where $|rank(\mathcal{B})|=\kappa$.
\end{theorem}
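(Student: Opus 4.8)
Write $\alpha=rank(\mathcal{B})$ and $\kappa=|\alpha|$, and let $r\colon\mathcal{B}\to\alpha$ be the rank function. From Definition \ref{Definition Rank of Noetherian families} together with Lemma \ref{L: N} one checks that for every Noetherian family the level function obeys the recursion $r(B)=\sup\{r(C)+1:B\subsetneq C\}$ (with $\sup\emptyset=0$). Consequently, if $\mathcal{B}'\subseteq\mathcal{B}$ is any subfamily admitting a map $\rho\colon\mathcal{B}'\to\kappa$ that is strictly decreasing along proper inclusion (i.e. $B\subsetneq C$ implies $\rho(C)<\rho(B)$), then a one-line induction on the well-founded superset relation gives $r_{\mathcal{B}'}(B)\le\rho(B)<\kappa$ for every $B\in\mathcal{B}'$, whence $rank(\mathcal{B}')\le\kappa$. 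So the plan is to produce a sub-$\pi$-base $\mathcal{B}'$ carrying such a $\kappa$-valued, inclusion-decreasing labelling. If $\alpha$ is already a cardinal (in particular if $\alpha$ is finite or $\alpha\le\kappa$) we may take $\mathcal{B}'=\mathcal{B}$ and $\rho=r$; so assume $\kappa<\alpha<\kappa^{+}$.

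The freedom I would exploit is that $\mathcal{B}$ is a $\pi$-base, not merely a base, which gives two monotonicity facts. First, smaller members are more useful: if $B'\subseteq B$ then every open set containing $B$ contains $B'$, so replacing members by subsets never destroys the $\pi$-base property. Second, and dually, supersets of retained members are disposable: if $B\in\mathcal{B}'$ and $B\subsetneq C$, then any open set witnessed by $C$ is already witnessed by $B$, so $C$ may be discarded without harm. Hence the only genuine obstruction to collapsing $\mathcal{B}'$ all the way down to an antichain is a $\subseteq$-descending chain of forced witnesses with no $\subseteq$-minimal member: covering ever-smaller open sets can compel us to keep cofinally small elements of such a chain, and a $\subseteq$-descending chain of order type $\lambda$ (under $\supseteq$) has rank exactly $\lambda$.

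The key quantitative point is that we need keep such descending chains only as deep as necessary. Along any $\subseteq$-descending chain the values of $r$ strictly increase and remain below $\alpha$, so such a chain is reverse-well-ordered of some order type $\lambda<\kappa^{+}$ with $|\lambda|\le\kappa$, and therefore $\mathrm{cf}(\lambda)\le\kappa$; moreover a cofinal (cofinally small) subchain of order type $\mathrm{cf}(\lambda)$ still meets every open set that the full chain met, since a witness $c_\eta\subseteq U$ is replaced by any smaller retained $c_{\eta'}\subseteq c_\eta\subseteq U$. I would then build $\mathcal{B}'$ by a transfinite recursion of length $\kappa$: at round $\xi<\kappa$ adjoin a maximal antichain $\mathcal{A}_\xi$ chosen from the $\subseteq$-maximal members of $\mathcal{B}$ that are not yet dominated from below by previously chosen members, set $\rho\equiv\xi$ on $\mathcal{A}_\xi$, and finally put $\mathcal{B}'=\bigcup_{\xi<\kappa}\mathcal{A}_\xi$. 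By construction each $\mathcal{A}_\xi$ is an antichain whose members lie below only the earlier layers, so $\rho$ is strictly decreasing along inclusion, while the two monotonicity facts are arranged to keep $\mathcal{B}'$ a $\pi$-base, with Theorem \ref{t:2} guaranteeing throughout that passing to the subfamily can only lower levels.

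The crux, and the only place where the hypothesis $|\alpha|=\kappa$ is genuinely used, is to show that this process is exhausted in $\kappa$ rounds, equivalently that no retained member ever receives level $\ge\kappa$. Since all levels of $\mathcal{B}$ lie in $\alpha$ and $r$ strictly increases down each chain, only $\kappa$ distinct level-values are available to any chain, and the cofinal thinning above caps each forced descending chain at rank $\mathrm{cf}(\lambda)\le\kappa$. I expect the hard part to be the \emph{global assembly}: verifying that the thinnings of individual chains cohere into a single family whose recomputed rank stays $\le\kappa$, rather than accumulating across branchings into a level $\ge\kappa$. Controlling this interaction, most cleanly by maintaining the explicit inclusion-decreasing $\kappa$-labelling $\rho$ as an invariant of the recursion and checking it can always be extended when a new layer is added, is where the real work lies.
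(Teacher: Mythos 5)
Your preliminary reductions are sound: the recursion $r(B)=\sup\{r(C)+1:B\subsetneq C\}$ does follow from Lemma \ref{L: N}, and any labelling $\rho\colon\mathcal{B}'\to\kappa$ with $B\subsetneq C\Rightarrow\rho(C)<\rho(B)$ forces $rank(\mathcal{B}')\le\kappa$ (this is, in essence, the Claim inside the paper's own proof). The genuine gap is the construction of $\mathcal{B}'$: your recursion reproduces the original level decomposition and then truncates it, so it does not yield a $\pi$-base. Indeed, if $\mathcal{A}_\eta=\mathcal{B}_\eta$ for all $\eta<\xi$, then the pool at stage $\xi$ is exactly $\mathcal{B}-\bigcup_{\eta<\xi}\mathcal{B}_\eta$: in a Noetherian family no set of level $\ge\xi$ can contain a set of level $<\xi$ (if $W\subseteq B$ and $W\ne B$ then $r(B)<r(W)$), so the clause ``not dominated from below by previously chosen members'' excludes nothing beyond the earlier layers themselves, and the $\subseteq$-maximal members of the pool are precisely $\mathcal{B}_\xi$. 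Hence $\mathcal{B}'=\bigcup_{\xi<\kappa}\mathcal{B}_\xi$, which omits all levels $\ge\kappa$; these exist since $rank(\mathcal{B})>\kappa$, and they can be indispensable. Concretely, let $X=\omega+1$ with topology $\{\emptyset,\{\omega\}\}\cup\{[n,\omega]:n\in\omega\}$, where $[n,\omega]=\{m:n\le m\le\omega\}$, and let $\mathcal{B}=\tau^*(X)$. This is a Noetherian $\pi$-base with $\mathcal{B}_n=\{[n,\omega]\}$, $\mathcal{B}_\omega=\{\{\omega\}\}$, $rank(\mathcal{B})=\omega+1$, $\kappa=\omega$. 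Your recursion keeps $\{[n,\omega]:n\in\omega\}$ and drops $\{\omega\}$, but the nonempty open set $\{\omega\}$ contains no kept member, so the result is not a $\pi$-base. Note also that your two guiding ideas point in opposite directions and are never reconciled: the chain-thinning remark correctly says one must retain the cofinally \emph{small} members of a descending chain (here $\{\omega\}$), yet the recursion retains $\subseteq$-\emph{maximal} members first and runs out of rounds before reaching the small ones.

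The paper's proof supplies exactly the idea you are missing: fix a bijection $f\colon\kappa\to rank(\mathcal{B})$ and process the levels $\mathcal{B}_{f(0)},\mathcal{B}_{f(1)},\dots$ in this \emph{reshuffled} order, at stage $i$ keeping $\mathcal{C}_i=\{B\in\mathcal{B}_{f(i)}:\text{no previously kept }W\text{ has }W\subseteq B\}$. Because $f$ is onto, every level of $\mathcal{B}$ is visited within $\kappa$ rounds; because every discarded set contains a kept one, $\mathcal{B}'=\bigcup_{i<\kappa}\mathcal{C}_i$ remains a $\pi$-base; and the stage index then plays the role of your $\rho$, giving $rank(\mathcal{B}')\le\kappa$. (In the example above, taking $f(0)=\omega$ makes the construction keep $\{\omega\}$ and discard everything else.) Finally, you locate the ``real work'' in the wrong place: with your $\rho$-invariant the rank bound is automatic; the difficulty is preserving the $\pi$-base property within $\kappa$ rounds, and no top-down recursion in the natural level order can achieve this.
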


\begin{proof} Let Noetherian union  of $\mathcal{B}$ be $\bigcup_{i<rank(\mathcal{B})}\mathcal{B}_i$ and let $f$ be a bijection from $\kappa$ to $rank(\mathcal{B})$.

By using transfinite induction, we will  define some sets.  For $i=0$, say  $\mathcal{C}_0=\mathcal{B}_{f(0)}$. For any $i< \kappa-\{0\}$, assume that for every $j<i$, $\mathcal{C}_j$ is defined. After that,  define  $\mathcal{C}_i=\{B\in \mathcal{B}_{f(i)}:\forall j<i ( W\in \mathcal{C}_j\rightarrow W\nsubseteq B)\}$.

Now, define $\mathcal{B}'=\bigcup_{i< \kappa}\mathcal{C}_i$. 

Then $\mathcal{B}'$ is Noetherian because it is a subfamily of $\mathcal{B}$ and $\mathcal{B}$  is Noetherian. 

To see $\mathcal{B}'$ is a $\pi$-base take any $U\in \mathcal{B}$, then there exists an $i\in \kappa$ such that $U\in \mathcal{B}_{f(i)}$. If $U\in  \mathcal{C}_{i}$, then $U\in \mathcal{B}'$. If not, then there exists a $j<i$ and $W\in \mathcal{C}_j$ such that $W\subseteq U$. Thus $W\in \mathcal{B}'$. Therefore, $\mathcal{B}'$ is a $\pi$-base of $X$.

Now, we   see that $rank(\mathcal{B}')\leq \kappa$. Say Noetherian union of $\mathcal{B}'$ is $\bigcup_{i<rank(\mathcal{B}')}\mathcal{B}'_i$.  

\textbf{Claim.} For any $i\in \kappa$, $\bigcup_{j\leq i}\mathcal{C}_j\subseteq\bigcup_{j\leq i} \mathcal{B}'_j$.

\textbf{Proof of the Claim.} We use  transfinite induction. For  $i\in \kappa$, suppose it is true that  $\bigcup_{j\leq k}\mathcal{C}_j\subseteq\bigcup_{j\leq k} \mathcal{B}'_j$ for all $k<i$.

Take any $U\in \bigcup_{j\leq i}\mathcal{C}_i$ and assume $U\notin \bigcup_{j\leq i} \mathcal{B}'_j$. From the induction hypothesis and definition of $\mathcal{C}_i$,  $U\in \mathcal{C}_{i}\subseteq \mathcal{B}_{f(i)}$. Because $U\in \mathcal{B}'$ and  $U\notin \bigcup_{j\leq i} \mathcal{B}'_j$, there exists  a  $\mathcal{B}'_m$ such that $i<m<rank(\mathcal{B}')$ and $U\in \mathcal{B}'_m$. From Lemma \ref{L: N}, 
there exists a $D\in \mathcal{B}'_i$ such that $U\subsetneqq D$. Since $D\in \mathcal{B}$,  there exists $h\in \kappa$ such that $D\in \mathcal{C}_h\subseteq  \mathcal{B}_{f(h)}$. 
Since $U\subsetneqq D$, $D\in \mathcal{B}_{f(h)}$ and $U\in \mathcal{B}_{f(i)}$, we get $h\neq i$  and $f(h)<f(i)$. So, $h<i$ or $i<h$. Suppose  $h<i$. Because $D\in \mathcal{C}_h$, from induction hypothesis, $D\in \mathcal{C}_h \subseteq\bigcup_{j\leq h} \mathcal{B}'_j$. Therefore,  $D\in (\bigcup_{j\leq h} \mathcal{B}'_j)\cap \mathcal{B}'_i$. But $\{\mathcal{B}'_i:i<rank(\mathcal{B}')\}$ is disjoint. This is a contradiction. Suppose $i<h$. Then, from the definition of  $\mathcal{C}_h$, $D\notin \mathcal{C}_h$ because   $U\in\mathcal{C}_i$ and $U\subseteq D$. But $D\in \mathcal{C}_h$, this is a contradiction.  \kare
 
 Now suppose that $rank(\mathcal{B}')>\kappa$. Then, $\mathcal{B}'_\kappa\neq \emptyset$. Take any $B\in \mathcal{B}'_\kappa$. 
 Then there exists an $i\in rank(\mathcal{B})$ such that $B\in \mathcal{B}_i$. Because $f$ is a bijection,  
 there exists a  $j<\kappa$ such that  $f(j)=i$ and $B\in \mathcal{C}_{j}\subseteq \mathcal{B}_{f(j)}=\mathcal{B}_{i}$. Then, 
 from the claim, $B\in \mathcal{C}_{j}\subseteq \bigcup_{k\leq j}\mathcal{B}'_j$. So,  $B\in (\bigcup_{k\leq j} \mathcal{B}'_j)\cap \mathcal{B}'_\kappa$. But $\{\mathcal{B}'_i:i<rank(\mathcal{B}')\}$ is disjoint. This is a contradiction.

\end{proof}

\begin{remark} \label{C, B yildiz}
Note that being topological space or $\pi$-base are not essential in the proof above. Hence, the proof implies  that  for any given    Noetherian family $\mathcal{A}$, there exists a Noetherian sub-family $\mathcal{A}'\subseteq \mathcal{A}$ such that $rank(\mathcal{A}')\leq |rank(\mathcal{A})|$ and for any $A\in \mathcal{A}$ there is an $A'\in \mathcal{A}'$ with the condition that $A'\subseteq A$. \begin{flushright}
	 \kare
\end{flushright}
\end{remark}

We give the following definition which is  important for this article. Although it is clear from the context, it is useful to repeat that in this paper, especially, in the following definition and related results (as we mentioned before) if we write a set $A$ as $A=\{a_i:i\in \alpha\}$ where $\alpha$ is a  cardinal  or ordinal number, then we  suppose that  this matching is one to one, i.e, $a_i\neq a_j$ if $i\neq j$.

\begin{definition}\label{tanm}
	Let  $\mathcal{A}$ be a Noetherian family and let Noetherian union of $\mathcal{A}$ be	$\bigcup_{i<\alpha} \mathcal{A}_i$. What we mean with \textbf{an $\alpha\times \beta$ Noetherian table of $\mathcal{A}$} or shortly \textbf{a Noetherian table of $\mathcal{A}$}, which is denoted by $[\mathcal{A}]$, is the following. 
	
	\[\begin{bmatrix}
		A^0_0&A^1_0&A^2_0&\dots  \\
		A^0_1&A^1_1&A^2_1&\dots\\
		A^0_2&A^1_2&A^2_2&\dots\\
		\dots&\dots&\dots& \dots \\
		
	\end{bmatrix}\]
	
	There are $\alpha$, which is rank of $\mathcal{A}$, many rows in this Noetherian table and for each $i\in \alpha$ there  exists an ordinal $\theta_i$ such that $\mathcal{A}_i=\{A^j_i\in \mathcal{A}_i:j \in\theta_i\}$.  The  $i$-th row is the tuple $(A^0_i,A^1_i,A^2_i,\ldots)$ consisting of all elements of   $\mathcal{A}_i$.  $\beta=\min\{\xi:\xi$ is an ordinal and $\theta_i\leq  \xi$ for all $i\in \alpha \}$.

	For any $A^j_i$ in this Noetherian table, note that    $r(A^j_i)=i$, the ordinal $j$ is called  \textbf{ order of $A^j_i$} in $\mathcal{A}_i$ of $[\mathcal{A}]$ and it is written: $o(A^j_i)=j$.\begin{flushright}
		\kare
	\end{flushright}

\end{definition}

Note that $r(A)$ is same in any Noetherian table,  but $o(A)$ can be different from Noetherian table to Noetherian table for any $A$ in a Noetherian family $\mathcal{A}$. So, if we need to fix $o(A)$, we have to fix a Noetherian table $[\mathcal{A}]$ for a Noetherian family $\mathcal{A}$.  Note that for any Noetherian table $[\mathcal{A}]$, $o(A)$ cannot reach the cardinal number $|\mathcal{A}|^+$, i.e.,  $o(A)<|\mathcal{A}|^+$. But it can be that $|\mathcal{A}|<o(A)<|\mathcal{A}|^+$.

\begin{remark}\label{bunada}
	Any Noetherian family  $\mathcal{A}$ has an  $\alpha\times\beta$ Noetherian table   such that  $rank(\mathcal{A})=\alpha$ and  where Noetherian union of $\mathcal{A}$ is  $\bigcup_{i<\alpha} \mathcal{A}_i$, $\beta=\min\{\xi:\xi$ is an ordinal and $|\mathcal{A}_i|\leq\xi$ for all $i \in \alpha\}$. Because, we can write $\mathcal{A}_i=\{A^j_i:j\in |\mathcal{A}_i|\}$  and, with Definition \ref{tanm},  $\beta=\min\{\xi:\xi$ is an ordinal and $|\mathcal{A}_i|\leq\xi$ for all $i \in \alpha\}$. \begin{flushright}
		\kare
	\end{flushright}
\end{remark}

\begin{lemma}\label{sona}
Let $\kappa$ be an infinite cardinal, $\mathcal{A}$  a Noetherian family and $[\mathcal{A}]$ an $\alpha\times\beta$ Noetherian table of  $\mathcal{A}$. If $\alpha,\beta\leq \kappa$, then $|\mathcal{A}|\leq \kappa$. 	
\end{lemma}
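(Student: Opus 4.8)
The plan is to read off a disjoint decomposition of $\mathcal{A}$ from the table and then apply routine cardinal arithmetic. First I would recall from Definition \ref{Definition Rank of Noetherian families} that the Noetherian union $\bigcup_{i<\alpha}\mathcal{A}_i$ writes $\mathcal{A}$ as a \emph{disjoint} union of the nonempty pieces $\mathcal{A}_i$, one for each row, and that there are exactly $\alpha=rank(\mathcal{A})$ such rows. Consequently $|\mathcal{A}|=\sum_{i<\alpha}|\mathcal{A}_i|$.

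Next I would bound each row. By Definition \ref{tanm} the $i$-th row is $\mathcal{A}_i=\{A^j_i:j\in\theta_i\}$ with the listing one-to-one, so $|\mathcal{A}_i|=|\theta_i|$, and $\beta=\min\{\xi:\theta_i\leq\xi\text{ for all }i\in\alpha\}$ forces $\theta_i\leq\beta$ as ordinals for every $i$. Passing to cardinalities gives $|\mathcal{A}_i|=|\theta_i|\leq|\beta|\leq\kappa$, where the last step uses the hypothesis $\beta\leq\kappa$ together with the fact that $\kappa$ is a cardinal. The index set is bounded the same way: $\alpha\leq\kappa$ yields $|\alpha|\leq\kappa$.

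Finally I would combine the estimates. Since $\mathcal{A}\neq\emptyset$ we have $\alpha\geq 1$, and since $\kappa$ is infinite, standard cardinal arithmetic gives $|\mathcal{A}|=\sum_{i<\alpha}|\mathcal{A}_i|\leq|\alpha|\cdot\kappa\leq\kappa\cdot\kappa=\kappa$. I do not expect any genuine obstacle: this is essentially the statement that a disjoint union of at most $\kappa$ sets, each of size at most $\kappa$, has size at most $\kappa$. The only point demanding care is keeping the ordinal/cardinal distinction straight, namely that the given inequalities $\theta_i\leq\beta$ and $\alpha\leq\kappa$ hold between ordinals while the cardinal arithmetic is carried out on $|\theta_i|\leq|\beta|$ and $|\alpha|\leq\kappa$.
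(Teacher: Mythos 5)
Your proof is correct and follows essentially the same route as the paper: decompose $\mathcal{A}$ via its Noetherian union, bound each row by $|\beta|\leq\kappa$ and the number of rows by $|\alpha|\leq\kappa$, and finish with the standard cardinal arithmetic for a union of at most $\kappa$ sets of size at most $\kappa$. The only cosmetic difference is that you sum over the disjoint rows directly, whereas the paper cites the bound $|\bigcup_{i<\alpha}\mathcal{A}_i|\leq|\alpha|\cdot\sup\{|\mathcal{A}_i|:i\in\alpha\}$ from the literature; these are the same estimate.
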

\begin{proof}
	Let $\bigcup_{i<\alpha}\mathcal{A}_i$ be the Noetherian union of $\mathcal{A}$. Then (from Definition \ref{tanm}), $|\mathcal{A}_i|\leq \beta$ for every $i\in \alpha$. So,  for every $i\in \alpha$, $|\mathcal{A}_i|\leq |\beta|$. Thus, $\sup\{|\mathcal{A}_i|:i\in \alpha\}\leq \beta$. (By using this fact: $|\bigcup_{i<\alpha}\mathcal{A}_i|\leq |\alpha| \cdot\sup\{|\mathcal{A}_i|:i\in \alpha\}$, Theorem 1.5.14 in \cite{holz2010introduction}, we have the following:)
	$|\mathcal{A}|=|\bigcup_{i<\alpha}\mathcal{A}_i|\leq|\alpha| \cdot\sup\{|\mathcal{A}_i|:i\in \alpha\}\leq |\alpha|\cdot|\beta|=\max\{|\alpha|,|\beta|\}\leq \kappa$.
\end{proof}

Let $\mathcal{A}$ be a Noetherian family and $[\mathcal{A}]$ an $\alpha\times\beta$ Noetherian table of  $\mathcal{A}$. Being   $|\mathcal{A}|\leq \kappa$ does not imply that $\alpha,\beta\leq \kappa$. Here is an example.

\begin{example}
 $\mathbb{R}$ stands for reel line and $[a,b]$ stands for the intervals of $\mathbb{R}$ in this example. 

Suppose $\mathcal{C}$ is a family which has the conditions:  

(1) $\emptyset\notin\mathcal{C}$ and  $|\mathcal{C}|=\omega$, 

(2) for any $A,B\in \mathcal{C}$ if $A\neq B$, then $A\nsubseteq B$ and $B\nsubseteq A$, 

(3) for any  $A\in \mathcal{C}$, $A\cap \mathbb{R}=\emptyset$.  

Suppose $\mathcal{D}=\{[1-\frac{1}{n+1},4+\frac{1}{n+1}]:n\in \omega\}\cup \{[2-\frac{1}{n+1},3+\frac{1}{n+1}]:n\in \omega\}$.

Now, say $\mathcal{A}=\mathcal{C}\cup\mathcal{D}$. \\Then, clearly $\mathcal{A}$ is Noetherian and $rank(\mathcal{A})=\omega+\omega$. Say Noetherian union of $\mathcal{A}$ is $\bigcup_{i<{\omega+\omega}}\mathcal{A}_i$. Thus, $\mathcal{A}_0=\{[0,5]\}\cup \mathcal{C}$ and $\mathcal{A}_i=\{[1-\frac{1}{i+1},4+\frac{1}{i+1}]\}$ where  $0<i<\omega$.  $\mathcal{A}_\omega=\{[1,4]\}$  and $\mathcal{A}_i=\{[2-\frac{1}{i+1},3+\frac{1}{i+1}]\}$ where  $\omega<\omega+i<\omega+\omega$.
 Fix any ordinal $\beta$ where $\omega <\beta< \omega_1$. Then we can write $\mathcal{A}_0=\{A_0^j:j\in \beta\}$ and we can define an $(\omega+\omega)\times \beta$ Noetherian table 
 $$[\mathcal{A}]=\begin{bmatrix}
	A_0^0&A_0^1&A_0^2&A_0^3&\ldots\\A_0^1\\A_0^2\\ \ldots	
\end{bmatrix}.$$ 
Thus, $|\mathcal{A}|= \omega$, $\omega <\beta$ and $\omega <\omega+\omega$ where $\mathcal{A}$ has an  $(\omega+\omega)\times \beta$ Noetherian table.
\begin{flushright}
	\kare
\end{flushright}

\end{example}

\begin{theorem}\label{yeter}
Let $\kappa$ be an infinite cardinal, $\mathcal{A}$  a Noetherian family having an $\alpha\times\beta$ Noetherian table where $\alpha,\beta\leq \kappa$. Then, for any nonempty $\mathcal{B}\subseteq \mathcal{A}$  there is a $\theta\times \delta$ Noetherian table of  $\mathcal{B}$ such that $\theta \leq \alpha\leq\kappa$ and  $\delta\leq \kappa$. 	
\end{theorem}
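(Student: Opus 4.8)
The plan is to build the required table for $\mathcal{B}$ directly from its own intrinsic Noetherian structure, controlling the two dimensions separately by assembling results already in hand. First I would apply Theorem \ref{t:2}: since $\emptyset \neq \mathcal{B} \subseteq \mathcal{A}$, the family $\mathcal{B}$ is itself Noetherian and $rank(\mathcal{B}) \leq rank(\mathcal{A})$. Because the given table of $\mathcal{A}$ is $\alpha \times \beta$, Definition \ref{tanm} forces $\alpha = rank(\mathcal{A})$; so putting $\theta = rank(\mathcal{B})$ immediately gives $\theta \leq \alpha \leq \kappa$, which settles the first dimension.

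For the second dimension I would first bound the total size of $\mathcal{B}$. Applying Lemma \ref{sona} to $\mathcal{A}$ (whose table satisfies $\alpha,\beta \leq \kappa$) yields $|\mathcal{A}| \leq \kappa$, and hence $|\mathcal{B}| \leq |\mathcal{A}| \leq \kappa$. Next I would invoke Remark \ref{bunada} to produce the canonical table of $\mathcal{B}$: writing the Noetherian union of $\mathcal{B}$ as $\bigcup_{j<\theta} \mathcal{B}_j$ and indexing each level by its own cardinality, $\mathcal{B}_j = \{B^m_j : m \in |\mathcal{B}_j|\}$, one obtains a $\theta \times \delta$ table with $\delta = \min\{\xi : \xi \text{ is an ordinal and } |\mathcal{B}_j| \leq \xi \text{ for all } j < \theta\}$.

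It then remains only to check $\delta \leq \kappa$. Each level satisfies $|\mathcal{B}_j| \leq |\mathcal{B}| \leq \kappa$ as cardinals; since cardinals are initial ordinals, this is equally an inequality of ordinals, so $\kappa$ itself belongs to the set $\{\xi : |\mathcal{B}_j| \leq \xi \text{ for all } j < \theta\}$ over which $\delta$ is taken to be the minimum. Hence $\delta \leq \kappa$, and the canonical $\theta \times \delta$ table has both dimensions bounded as required.

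I do not anticipate a serious obstacle, since the statement is essentially a matter of combining Theorem \ref{t:2}, Lemma \ref{sona}, and Remark \ref{bunada}. The one point that genuinely requires care — the closest thing to a pitfall — is this last step: $\delta$ is defined as a minimum over \emph{all} ordinals $\xi$, so one must confirm that this ordinal minimum does not overshoot $\kappa$. The subtlety dissolves once one observes that the per-level cardinality bound $|\mathcal{B}_j| \leq \kappa$ is simultaneously a bound of ordinals, which makes $\kappa$ an admissible witness and therefore an upper bound for the minimum.
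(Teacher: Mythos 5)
Your proposal is correct and follows essentially the same route as the paper's own proof: Theorem \ref{t:2} for the rank bound $\theta\leq\alpha\leq\kappa$, Lemma \ref{sona} applied to $\mathcal{A}$ to get $|\mathcal{B}|\leq|\mathcal{A}|\leq\kappa$, and Remark \ref{bunada} to produce the $\theta\times\delta$ table with $\delta\leq\kappa$. Your closing observation that the per-level cardinal bound is also an ordinal bound (so $\kappa$ witnesses the minimum defining $\delta$) is a point the paper leaves implicit, but it is the same argument.
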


\begin{proof} From Theorem \ref{t:2},  $\mathcal{B}$ is Noetherian and  $rank(\mathcal{B})=\theta \leq \alpha=rank(\mathcal{A})\leq\kappa$.  From Lemma \ref{sona}, $|\mathcal{A}|\leq \kappa$. So, $|\mathcal{B}|\leq \kappa$, because $\mathcal{B}\subseteq \mathcal{A}$. Let $\bigcup_{i<\theta}\mathcal{B}_i$ be the Noetherian union of $\mathcal{B}$. Then for every $i<\theta$, $|\mathcal{B}_i|\leq \kappa$ because $\mathcal{B}_i\subseteq \mathcal{B}$ and $|\mathcal{B}|\leq \kappa$.
 If we say $\delta=\min\{\xi:\xi$ is an ordinal and $|\mathcal{B}_i|\leq\xi$ for all $i \in \theta\}$, then $\delta\leq \kappa$ and 
 from Remark \ref{bunada}, $\mathcal{B}$ has an  $\theta\times\delta$ Noetherian table. 
\end{proof}
We give the following remark about the theorem above.
\begin{remark}\label{bitmedi}
Let $\mathcal{A}$ be a Noetherian family having an $\alpha\times\beta$ Noetherian table. Then, for any nonempty $\mathcal{B}\subseteq \mathcal{A}$, can we say that   there is a $\theta\times \delta$ Noetherian table of  $\mathcal{B}$ such that $\theta \leq \alpha$ and  $\delta\leq \beta$?

From Theorem \ref{yeter}, we have $\theta \leq \alpha$. But it can be that $\delta> \beta$. The following example is about this. \begin{flushright}
	\kare
\end{flushright} 	
\end{remark}

\begin{example}\label{gitti}
Let $\kappa$ be an infinite cardinal. For all $i\in \kappa$, define these sets $a_i=\{j\in \kappa:i\leq j<\kappa\}$ and $b_i=\{i\}$. Thus, $a_0\supsetneqq a_1 \supsetneqq \ldots \supsetneqq a_i\supsetneqq a_{i+1}\supsetneqq\ldots$. In addition,  $\{b_i:i\in \kappa\}$  is a disjoint family and  $b_i\subsetneqq a_i$ for all $i\in \kappa$. Now, say $\mathcal{A}=\{a_i:i\in \kappa\}\cup\{b_i:i\in \kappa\}$. Then it is easy to check that    $\mathcal{A}$ has the following three properties.

(1) $\mathcal{A}$ is Noetherian.

(2) Noetherian union of $\mathcal{A}$ is $\bigcup_{i< \kappa}\mathcal{A}_i$ such that for all $i<\kappa$,   $\mathcal{A}_{i+1}=\{a_{i+1}, b_i\}$ and if $i$ is a limit ordinal or $i=0$, then $\mathcal{A}_i=\{a_i\}$.

(3) $\mathcal{A}$ has a $\kappa\times 2$ Noetherian table.\\ Say $\mathcal{B}=\{b_i:i\in \kappa\}$. Then it is easy to check that    $\mathcal{B}$ has the following  three properties.

(1) $\mathcal{B}$ is Noetherian.

(2) Noetherian union of $\mathcal{B}$ is $\bigcup_{i< 1}\mathcal{B}_i=\mathcal{B}_0$ such that $\mathcal{B}_0=\{b_i:i\in \kappa\}$.

(3) $\mathcal{B}$ has a $1\times \kappa$ Noetherian table. \\ Thus, $\mathcal{A}$ has a 
$\kappa\times 2$ Noetherian table, $\mathcal{B}\subseteq \mathcal{A}$ and  $\mathcal{B}$ has an $1\times \kappa$ Noetherian table. Furthermore, $\mathcal{B}$ has no $\theta\times \delta$ Noetherian table if $\delta \leq 2$.

\begin{flushright}
	\kare
\end{flushright} 	
\end{example}

It is natural to ask that does every topological space  have a Noetherian base or $\pi$-base? It is known that
every metric space $X$ has a Noetherian base $\mathcal{B}$ with $rank(\mathcal{B})\leq \omega$ (\cite{onal2020strategies}, p. 6). But in general, we cannot say that every topological space has a Noetherian base, because there are some spaces without Noetherian bases \cite{tamariz1988example}. For $\pi$-bases, answer of this question is affirmative because  we show that every topological space has a Noetherian $\pi$-base in the following theorem.

\begin{theorem}\label{Theorem: pi Base varsa Noetherian pi Base var}
	If $\mathcal{B}$ is a $\pi$-base of a space $X$ and $|\mathcal{B}|=\kappa$. Then, there exist a Noetherian $\pi$-base $\mathcal{B}'\subseteq \mathcal{B}$ such that

	(i) $rank(\mathcal{B'})\leq \kappa$ and $\mathcal{B'}$ has a $rank(\mathcal{B'})\times \beta$  Noetherian table  where  $\beta\leq \kappa$,

	(ii) $|\{B\in \mathcal{B'}: W\subseteq B \}|<\kappa$ for all $W\in \mathcal{B'}$.
  \end{theorem}

\begin{proof}
Say $\mathcal{B}=\{B_i:i\in \kappa\}$.

Choosing procedure: For $i=0$, $B_0$ is chosen. For any $i< \kappa-\{0\}$, assume for every $j< i$ this procedure is completed, after that, if there exists a $j<i$ such that $B_j\subsetneqq B_i$ and $B_j$ is chosen, then  $B_i$ is not chosen, otherwise $B_i$ is chosen.

Now, define $\mathcal{C}=\{B\in \mathcal{B}:$ B is chosen$\}$. 

It is easy to see that $\mathcal{C}$ is a $\pi$-base. To prove  $\mathcal{C}$ is a Noetherian, fix any increasing  sequence  $A_0\subsetneqq A_1\subsetneqq A_2 \subsetneqq\ldots $ of elements of  $\mathcal{C}$. Then, there exists $i_k\in \kappa$ such that $A_k=B_{i_k}\in \mathcal{B}$.  Now, if we suppose $j_0<j_1$, it means that there exists $j_0<j_1$ such that $B_{j_0}\subsetneqq B_{j_1}$ and $B_{j_0}$ is chosen. It contradicts   the choosing procedure because $ B_{j_1}$ is also chosen. Thus, we get $j_0>j_1$. With the same argument, we get   $j_1>j_2$ and $j_2>j_3$, etc. Because there is no strictly decreasing sequence of ordinals, the  increasing  sequence  $A_0=B_{j_0}\subsetneqq A_1=B_{j_1}\subsetneqq A_2=B_{j_2}\subsetneqq\ldots$ is finite.
Therefore, $\mathcal{C}$ is a Noetherian $\pi$-base. 
$|rank(\mathcal{C})|\leq\kappa$ because $|\mathcal{B}|\leq\kappa$ and $\mathcal{C}\subseteq \mathcal{B}$.  
 From Theorem \ref{Prop: },  there exists a  Noetherian $\pi$-base $\mathcal{B'}\subseteq\mathcal{C}$  such that 
 $rank(\mathcal{B'})\leq \kappa$.
 
 (i) Thus, 
 $rank(\mathcal{B'})\leq \kappa$. Let Noetherian uninon of $\mathcal{B'}$ be $\bigcup_{i\in {rank(\mathcal{B'})}}=\mathcal{B}'_i$ and $\beta=\min\{\xi:\xi$ is an ordinal and $|\mathcal{B}'_i|\leq \xi$ for all $i\in  rank(\mathcal{B'})\}$.  Because $|\mathcal{B'}|\leq\kappa$, $|\mathcal{B}'_i|\leq\kappa$. So,  $\beta\leq \kappa$. Thus,  
  $\mathcal{B'}$ has a $rank(\mathcal{B'})\times \beta$  Noetherian table where  $rank(\mathcal{B'}),\beta\leq \kappa$.

(ii)  Take any $W\in \mathcal{B'}$. Because $\mathcal{B'}\subseteq \mathcal{C}\subseteq \mathcal{B}$ and  $\mathcal{B}=\{B_i:i\in \kappa\}$ and the choosing procedure, there exists an  $i_0\in \kappa$ such that $B_{i_0}=W$ and $B_{i_0}$ was chosen. Because of the choosing procedure, for any $B\in \mathcal{B'}$, if $B_{i_0}\subseteq B$, then $B\in \{B_i\in\mathcal{B}: i\le{i_0}\text{ and $B_i$ was chosen}\}$. Thus, $\{B\in \mathcal{B'}: W\subseteq B \}\subseteq \{B_i\in\mathcal{B}: i\le{i_0}\text{ and $B_i$ was chosen}\}$.  Because $|\{B_i\in\mathcal{B}: i\le{i_0}\text{ and $B_i$ was chosen}\}|\leq|i_0|<\kappa$, we have $|\{B\in \mathcal{B'}: W\subseteq B \}|<\kappa$.

\end{proof}

\begin{corollary}
Let $X$ be space and $\pi w(X)\leq\kappa$. For any $\pi$-base  $\mathcal{B}$  of  $X$, there exists a Noetherian $\pi$-base $\mathcal{B}'\subseteq \mathcal{B}$ such that

(i) $rank(\mathcal{B'})\leq \kappa$ and $\mathcal{B'}$ has a $rank(\mathcal{B'})\times \beta$  Noetherian table  where  $\beta\leq \kappa$,

(ii) $|\{B\in \mathcal{B'}: W\subseteq B \}|<\kappa$ for all $W\in \mathcal{B'}$.	
\end{corollary}
\begin{proof}
Suppose $\mathcal{C}$  is a $\pi$-base  for $X$ with $|\mathcal{C}|=\mu\leq \kappa$. Let $\mathcal{C}=\{C_i:i\in \mu\}.$ Thus, for any $i\in \mu$, we can choose a $B_i\in \mathcal{B}$ such that $B_i\subseteq C_i$ and define  $\mathcal{B}^*=\{B_i:i\in \mu\}$. Therefore, $\mathcal{B}^*$ is a $\pi$-base for $X$ and $|\mathcal{B}^*|\leq \kappa.$ Thus, from Theorem \ref{Theorem: pi Base varsa Noetherian pi Base var},  there exist a Noetherian $\pi$-base $\mathcal{B}'\subseteq \mathcal{B}^*$ which has the conditi
ons (i) and (ii). 
 \end{proof}

The following corollaries are direct   results of Theorem \ref{Theorem: pi Base varsa Noetherian pi Base var}. So, we omitted their proofs.  

\begin{corollary}\label{k}
Let $\mathcal{B}$ be a $\pi$-base  of a space $X$. Then, there exists a Noetherian $\pi$-base $\mathcal{B'}$ of $X$ such that $\mathcal{B'}\subseteq \mathcal{B}$. \begin{flushright}
	\kare
\end{flushright}
\end{corollary}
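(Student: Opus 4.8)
The plan is to obtain this statement as an immediate specialization of Theorem \ref{Theorem: pi Base varsa Noetherian pi Base var}, which is exactly why the authors mark the proof as omittable. First I would let $\kappa=|\mathcal{B}|$ denote the cardinality of the given $\pi$-base $\mathcal{B}$. Since a $\pi$-base is by definition a family of nonempty open sets and hence a genuine set, $\kappa$ is a well-defined cardinal, so the hypothesis of Theorem \ref{Theorem: pi Base varsa Noetherian pi Base var}, namely that $\mathcal{B}$ is a $\pi$-base of $X$ with $|\mathcal{B}|=\kappa$, is automatically met.

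Applying Theorem \ref{Theorem: pi Base varsa Noetherian pi Base var} then produces a Noetherian $\pi$-base $\mathcal{B}'\subseteq\mathcal{B}$ of $X$. The theorem in fact delivers more, namely the bounds $rank(\mathcal{B}')\leq\kappa$ together with the existence of a $rank(\mathcal{B}')\times\beta$ Noetherian table satisfying $\beta\leq\kappa$; but none of these quantitative refinements is needed for the corollary. Keeping only the existence clause, $\mathcal{B}'\subseteq\mathcal{B}$ is a Noetherian $\pi$-base of $X$, which is precisely the asserted conclusion.

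There is essentially no obstacle here: the corollary is the stronger theorem with its rank and table data discarded. The only point that even merits a remark is the observation that $\mathcal{B}$ admits a cardinality to play the role of $\kappa$, and this is immediate. One could equally phrase the argument as ``invoke Theorem \ref{Theorem: pi Base varsa Noetherian pi Base var} with $\kappa=|\mathcal{B}|$ and forget the extra hypotheses on $rank(\mathcal{B}')$ and $\beta$,'' which is the one-line justification behind the authors' decision to omit the proof.
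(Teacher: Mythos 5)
Your proposal is correct and is exactly the paper's intended argument: the authors omit the proof precisely because Corollary \ref{k} is Theorem \ref{Theorem: pi Base varsa Noetherian pi Base var} applied with $\kappa=|\mathcal{B}|$, with the rank and Noetherian-table conclusions simply discarded. Nothing further is needed.
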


\begin{corollary}\label{Cor:  sheme}
	Let $X$ be a space.  $\pi w(X)\leq\kappa$ if and only if  $X$ has a Noetherian $\pi$-base $\mathcal{B}$ such that $|\mathcal{B}|\leq \kappa$ and $\mathcal{B}$ has an $\alpha\times \beta$  Noetherian table where  $\alpha, \beta\leq \kappa$. \begin{flushright}
		\kare
	\end{flushright}
\end{corollary}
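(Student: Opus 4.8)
The plan is to prove the two implications of the equivalence separately, observing that essentially all the real work has already been carried out in Theorem~\ref{Theorem: pi Base varsa Noetherian pi Base var}, which is exactly why the authors list this as a direct corollary.

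For the easy implication, I would assume that $X$ carries a Noetherian $\pi$-base $\mathcal{B}$ with $|\mathcal{B}|\leq\kappa$; here the hypothesis about the existence of an $\alpha\times\beta$ Noetherian table is not even needed. Since $\mathcal{B}$ is in particular a $\pi$-base of $X$ of cardinality at most $\kappa$, the definition of $\pi$-weight immediately yields $\pi w(X)\leq\kappa$. If instead one wishes to start only from the table data, then for infinite $\kappa$ one may first invoke Lemma~\ref{sona} with $\alpha,\beta\leq\kappa$ to recover $|\mathcal{B}|\leq\kappa$, and then conclude as before.

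For the forward implication, suppose $\pi w(X)\leq\kappa$. By the definition of $\pi$-weight there is a $\pi$-base $\mathcal{B}_0$ of $X$ with $|\mathcal{B}_0|=\lambda\leq\kappa$. I would then apply Theorem~\ref{Theorem: pi Base varsa Noetherian pi Base var} to $\mathcal{B}_0$, with the cardinal $\lambda$ playing the role of ``$\kappa$'' in that statement. This produces a Noetherian $\pi$-base $\mathcal{B}'\subseteq\mathcal{B}_0$ with $rank(\mathcal{B}')\leq\lambda$ admitting a $rank(\mathcal{B}')\times\beta$ Noetherian table with $\beta\leq\lambda$. Setting $\mathcal{B}=\mathcal{B}'$ and $\alpha=rank(\mathcal{B}')$, and using $\mathcal{B}'\subseteq\mathcal{B}_0$ to get $|\mathcal{B}'|\leq\lambda$, all three requirements $|\mathcal{B}|\leq\kappa$, $\alpha\leq\kappa$ and $\beta\leq\kappa$ follow at once from $\lambda\leq\kappa$.

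The only point that requires any care, and hence the sole potential obstacle, is the bookkeeping with the cardinal: Theorem~\ref{Theorem: pi Base varsa Noetherian pi Base var} is phrased with the exact equality $|\mathcal{B}_0|=\lambda$, whereas here I begin only with the inequality $\pi w(X)\leq\kappa$. I resolve this by applying the theorem at the genuine cardinality $\lambda=|\mathcal{B}_0|$ of the chosen $\pi$-base, and then absorbing $\lambda\leq\kappa$ at the very end; since every bound produced by the theorem is $\leq\lambda$ and $\lambda\leq\kappa$, each is a fortiori $\leq\kappa$. No further cardinal estimate is needed, which confirms that the statement is indeed an immediate consequence of the preceding theorem.
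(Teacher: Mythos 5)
Your proposal is correct and is essentially the argument the paper intends: the authors omit the proof precisely because the corollary is a direct consequence of Theorem~\ref{Theorem: pi Base varsa Noetherian pi Base var} (forward direction) together with the definition of $\pi$-weight and Lemma~\ref{sona} (backward direction). Your care in applying the theorem at the exact cardinality $\lambda=|\mathcal{B}_0|$ and then absorbing $\lambda\leq\kappa$ is exactly the right way to handle the equality in the theorem's hypothesis.
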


Corollary \ref{k} is including the following corollary. But we think that it would be more catchy to state Corollary \ref{Cor: Every space has noeth base} separately.

\begin{corollary}\label{Cor: Every space has noeth base}
	Every topological space has a Noetherian $\pi$-base. \begin{flushright}
		\kare
	\end{flushright}
\end{corollary}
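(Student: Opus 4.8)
The plan is to reduce the statement to the reduction machinery already in hand, since the corollary claims nothing beyond what Theorem \ref{Theorem: pi Base varsa Noetherian pi Base var} and Corollary \ref{k} provide. The crucial—and essentially only—observation is that every topological space $X$ carries a canonical $\pi$-base for free, namely the family $\tau^*(X)$ of all nonempty open sets. Indeed, any nonempty open set $U$ trivially contains a member of $\tau^*(X)$, namely $U$ itself, so $\tau^*(X)$ satisfies the definition of a $\pi$-base. Hence the hypothesis of Corollary \ref{k} is automatically met by every space.

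With this in place, I would simply invoke Corollary \ref{k} taking $\mathcal{B}=\tau^*(X)$: it yields a Noetherian $\pi$-base $\mathcal{B}'\subseteq \tau^*(X)$ of $X$, which is exactly what is asserted. Alternatively one can appeal directly to Theorem \ref{Theorem: pi Base varsa Noetherian pi Base var} applied to $\mathcal{B}=\tau^*(X)$ with $\kappa=|\tau^*(X)|$; this produces the same Noetherian $\pi$-base, now with the additional rank bound $rank(\mathcal{B}')\leq\kappa$ and the accompanying Noetherian table, none of which is needed for the present statement.

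The main point to stress is that there is no genuine obstacle here: all the real work—constructing a Noetherian $\pi$-base from an arbitrary $\pi$-base via the well-ordering-based choosing procedure and then trimming the rank—was already carried out in Theorem \ref{Theorem: pi Base varsa Noetherian pi Base var}. Since that theorem requires of the space only that it possess \emph{some} $\pi$-base, and since $\tau^*(X)$ always supplies one, the universality claim follows at once. This is precisely why the authors flag the corollary as a direct consequence and omit the proof.
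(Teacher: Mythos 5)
Your proposal is correct and matches the paper's intended argument exactly: the paper derives this corollary as a direct consequence of Theorem \ref{Theorem: pi Base varsa Noetherian pi Base var} (via Corollary \ref{k}), with the implicit observation, which you make explicit, that $\tau^*(X)$ is always a $\pi$-base. Nothing further is needed.
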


\section{From winning strategy to 2-tactic for \2 in BM(X)}

In this section, we study on some topological spaces which have some Noetherian $\pi$-bases to get  \2's  2-tactic from a winning strategy. We have done most of the things mentioned in the introduction in this section. 

We first give some results and define the  \textit{purification} of open families. After that, we give  Theorem \ref{Teorem winning ten 2-tactige ana sonuc 2}. This theorem is one of our main results in this paper and it also includes the Galvin's theorem.

The statement "$O$ has a cellular family of cardinality $\omega$" means that  there exists a disjoint family $\mathcal{A}\subseteq \tau^*(O)$ such that $|\mathcal{A}|=\omega$.

Let's start with the following lemma that is most likely known.
\begin{lemma}\label{l:1} Let $X$ be a space and $U\in \tau^*(X)$. 
	For every $n\in \omega$ if  $U$  has a cellular family of cardinality  $n$, then $U$  has a cellular family of cardinality $\geq\omega$.
	
\end{lemma}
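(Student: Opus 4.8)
The plan is to prove the lemma by directly constructing a countably infinite cellular family out of the hypothesis that $U$ carries arbitrarily large finite cellular families. Call a set $W\in\tau^*(X)$ \emph{big} if for every $n\in\omega$ it has a cellular family of cardinality $n$, and \emph{small} otherwise (so smallness means some finite $N$ bounds the cardinality of every cellular family in $W$). The hypothesis makes $U$ big, and the goal is to produce a cellular family of cardinality $\geq\omega$ inside $U$.

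The key tool is an \emph{additivity} estimate for smallness. Suppose $\{M_1,\dots,M_k\}$ is a finite cellular family in $W$ whose union is dense in $W$ (every element of $\tau^*(W)$ meets some $M_i$), and suppose each $M_i$ is small with bound $N_i$. Then $W$ is small with bound $N_1+\dots+N_k$: given any cellular family $\{C_j\}$ in $W$, each $C_j$ meets some $M_i$ by density, and for a fixed $i$ the nonempty members of $\{C_j\cap M_i\}$ form a cellular family inside $M_i$ (they are disjoint since the $C_j$ are), so at most $N_i$ of the $C_j$ can meet $M_i$; summing over $i$ bounds the number of $C_j$ by $\sum_i N_i$. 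Taking the contrapositive, if $W$ is big and $\{M_1,\dots,M_k\}$ is a finite cellular family with dense union in $W$, then at least one $M_i$ is big.

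With this I would run a recursion that peels off disjoint open pieces while keeping a big remainder. Starting from $W_0=U$, at stage $n$ I use bigness of $W_n$ to pick a cellular family of cardinality $2$ in $W_n$ and extend it, by Zorn's lemma, to a \emph{maximal} cellular family $\mathcal{N}_n$ of $W_n$; maximality forces the union of $\mathcal{N}_n$ to be dense in $W_n$, and extending a two-element family guarantees $|\mathcal{N}_n|\geq 2$. If some $\mathcal{N}_n$ is infinite, it is already an infinite cellular family in $W_n\subseteq U$ and we are finished. Otherwise $\mathcal{N}_n$ is finite of size $\geq 2$ with dense union, so the additivity estimate yields a big member $M\in\mathcal{N}_n$; set $W_{n+1}=M$ and let $D_n$ be any other member of $\mathcal{N}_n$, so $D_n$ and $W_{n+1}$ are disjoint nonempty open subsets of $W_n$. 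If the recursion never halts, we obtain pairwise disjoint sets $D_0,D_1,\dots$ in $\tau^*(U)$ (for $m>n$ we have $D_m\subseteq W_m\subseteq W_{n+1}$, which is disjoint from $D_n$), that is, a cellular family of cardinality $\omega$ in $U$; either way $U$ has a cellular family of cardinality $\geq\omega$.

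The routine estimates (density from maximality, the disjointness of the $C_j\cap M_i$) are immediate, so the one point demanding care is the possibility that a maximal cellular family of a big set is a single dense open set, which would stall the peeling. I expect this to be the main obstacle, and it is precisely why the recursion extends a \emph{two-element} cellular family to a maximal one rather than taking an arbitrary maximal family, guaranteeing $|\mathcal{N}_n|\geq 2$ at every stage.
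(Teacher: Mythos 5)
Your proof is correct and follows essentially the same strategy as the paper's: both recursively produce a nested sequence of ``big'' open sets by showing that any finite maximal cellular family inside a big set must contain a big member (your additivity bound $N_1+\dots+N_k$ plays the role of the paper's pigeonhole count $n^*\cdot m+1$), and both collect one discarded member per stage to form the infinite cellular family. The only cosmetic difference is that the paper builds its finite maximal cellular family of size $\geq 2$ explicitly (adding $Y-\overline{\cup\mathcal{A}_2}$ to a two-element family if it is not already maximal), so it never meets the ``infinite maximal family'' case that you handle separately after invoking Zorn's lemma.
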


\begin{proof}
	
	\textbf{Claim.} Let $Y$ be a space and suppose for every $n\in \omega$,   $Y$  has a cellular family of cardinality  $n$. 
	
	(1) There exists a  maximal cellular family  $\mathcal{A}\subseteq \tau^*(Y)$ such that $2\leq |\mathcal{A}|$ and $\mathcal{A}$ is finite.
	
	(2) For any maximal cellular family  $\mathcal{A}\subseteq \tau^*(Y)$ if $\mathcal{A}$ is  finite, then there exists an $A\in \mathcal{A}$ such that $A$ also has that property, i.e., for every $n\in \omega$,  $A$  has a cellular family of cardinality  $n$.
	
	\textbf{Prof of the Claim.} (1) For $n=2$, $Y$  has a cellular family of cardinality  $2$, say this family is $\mathcal{A}_2$. If $\mathcal{A}_2$ is maximal, then say $\mathcal{A}=\mathcal{A}_2$. If not, define $W=Y-\overline{\cup\mathcal{A}}$. And say $\mathcal{A}=\mathcal{A}_2\cup\{W\}$.  (2) Suppose $\mathcal{A}=\{A_1,A_2,\ldots, A_m\}$ is finite and there is no $A_i\in \mathcal{A}$ which has that property. Then, for each $i\leq m$, there are $n_i\in \omega$ and  $\mathcal{A}_{n_i}$  cellular  family of $A_i$ such that $|\mathcal{A}_{n_i}|=n_i$ and if $\mathcal{C}$ is a  cellular  family of $A_i$, then $|\mathcal{C}|\leq n_i$. Say $\max\{n_i:i\leq m\}=n^*$. Then, from the hypothesis, there exists a 
	cellular family $\mathcal{D}\subseteq \tau^*(Y)$ 
	of $Y$ such that $|\mathcal{D}|=n^*.m+1$. Because $\mathcal{A}$ is maximal cellular, for each $D\in \mathcal{D}$ there is an $A_D\in \mathcal{A}$ such that $D\cap A_D\neq \emptyset$. Thus, it easy to check that there exists $A_{i^*} \in \mathcal{A}$
	such that $|\{D\in \mathcal{D}:D\cap A_{i^*}\neq \emptyset\}|\geq n^*+1>n_{i^*}$. Therefore $\{D\cap A_{i^*}\neq \emptyset:D\in \mathcal{D}\}\subseteq \tau^*(A_{i^*})$ is disjoint and $|\{D\cap A_{i^*}\neq \emptyset:D\in \mathcal{D}\}|>n_{i^*}$ but it contradicts   $|\mathcal{A}_{n_{i^*}}|=n_{i^*} \geq |\{D\cap A_{i^*}\neq \emptyset:D\in \mathcal{D}\}|$. \kare

	Let $U\in \tau^*(X)$ and suppose  
	for every $n\in \omega$, $U$  has a cellular family of cardinality  $n$. From the claim, we can say that there exists a finite maximal cellular  family $\mathcal{A}_0\subseteq \tau^{*}(U)$ of $U$ such that $|\mathcal{A}_0|\geq 2.$ and  there exist an  $A_0\in \mathcal{A}_0$ with the condition that  for every $n\in \omega$, $A_0$  has a cellular family of cardinality  $n$. Again from the claim, we can say that there exists a finite maximal cellular  family $\mathcal{A}_1\subseteq \tau^{*}(A_0)$ of $A_0$ such that $|\mathcal{A}_1|\geq 2$ and  there exist $A_1\in \mathcal{A}_1$ with the condition that  for every $n\in \omega$, $A_1$  has a cellular family of cardinality  $n$. Thus, inductively,  for each $i\in \omega-\{0\}$  there exists a finite maximal cellular  family $\mathcal{A}_i\subseteq \tau^{*}(A_{i-1})$ of $A_{i-1}$ such that $|\mathcal{A}_i|\geq 2.$ and  there exist $A_i\in \mathcal{A}_i$ with the condition that  for every $n\in \omega$, $A_i$  has a cellular family of cardinality  $n$. After that, for each $i\in \omega$ if we choose $B_i\in \mathcal{A}_i-\{A_i\}$, then $\{B_i:i\in \omega\}$ is the cellular  family of $U$ such that  $|\{B_i:i\in \omega\}|=\omega$.

\end{proof} 
The following proposition is most likely known.
\begin{proposition}\label{l:2} Let $X$ be a space and $U\in \tau^*(X)$.  For every disjoint family $\mathcal{A}\subseteq \tau^*(U)$, suppose that $|\mathcal{A}|<\omega$. Then, there exists an $n\in \omega$ such that for every disjoint family $\mathcal{A}\subseteq \tau^*(U)$,  $|\mathcal{A}|\leq n$.
	
\end{proposition}

\begin{proof}  This follows from Lemma \ref{l:1}.

\end{proof}
A space $X$ is called \textbf{Baire} if the countable intersection of dense open subsets of $X$ is dense in $X$. It is folklore that if \2 has a winning strategy on a space $X$, then $X$ is a Baire space (\cite{telgarsky1987topological}, p. 234). It is known that ever nonempty open subspace of a Baire space is also Baire. These facts are used in the following proof. 
\begin{theorem}\label{t:1}
	Let $X$ be a space, suppose \2 has a winning strategy in $BM(X)$ and there is a  $W\in \tau^*(X)$ with the condition  that there exists an $n\in \omega-\{0\}$ such that for every disjoint family $\mathcal{A}\subseteq \tau^*(W)$,  $|\mathcal{A}|\leq n$. In any play of  $BM(X)$ if \1 makes a move $U_i\supseteq W$, then \2 wins this play, even \2  has a 1-tactic after the move $U_i$ of \1. 
\end{theorem}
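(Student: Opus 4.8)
The plan is to exploit the finite cellularity bound on $W$ to locate a nonempty open set inside $W$ that is \emph{irreducible} (any two of its nonempty open subsets meet), and then to observe that on such a set the winning condition for \2 becomes automatic, because the winning-strategy hypothesis forces $X$, and hence this open set, to be Baire.

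First I would invoke Proposition \ref{l:2}: since every disjoint subfamily of $\tau^*(W)$ has cardinality at most $n$, there is a largest cardinality $m$ (with $1\leq m\leq n$) attained by a cellular family in $W$; fix a cellular family $\{A_1,\dots,A_m\}\subseteq\tau^*(W)$ of this maximal cardinality. I claim $A_1$ is irreducible. Indeed, if $A_1$ contained two disjoint nonempty open sets $B,C$, then $\{B,C,A_2,\dots,A_m\}$ would be a cellular family of cardinality $m+1$ in $W$, contradicting the maximality of $m$. Thus I put $V^*=A_1\in\tau^*(X)$; any two nonempty open subsets of $V^*$ meet, so every nonempty open subset of $V^*$ is dense in $V^*$.

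Next, since \2 has a winning strategy in $BM(X)$, the space $X$ is Baire, and therefore the nonempty open subspace $V^*$ is Baire as well. Now I would define the stationary rule $t(U)=U\cap V^*$ whenever $U\cap V^*\neq\emptyset$ (and, say, $t(U)=U$ otherwise, a clause that will never be triggered). Suppose \1 plays $U_i\supseteq W$ and \2 answers with $t$ from that round on. Then $t(U_i)=U_i\cap V^*=V^*$, since $V^*\subseteq W\subseteq U_i$, so \2's move is $V_i=V^*$; every later move of \1 satisfies $U_{i+1}\subseteq V_i=V^*$, whence inductively every $U_j$ with $j>i$ lies in $\tau^*(V^*)$ and $t(U_j)=U_j$. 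Because the moves decrease and $U_{i+1}\supseteq U_{i+2}\supseteq\cdots$ are dense open subsets of $V^*$, the Baire property of $V^*$ yields that $\bigcap_{j>i}U_j$ is dense in $V^*$, hence nonempty; as the whole sequence is decreasing, $\bigcap_{j\in\omega}U_j=\bigcap_{j>i}U_j\neq\emptyset$, so \2 wins. This exhibits a $1$-tactic for \2 valid from the move $U_i$ onward.

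The main point — and the only genuinely nontrivial step — is the extraction of the irreducible open set $V^*$: it is crucial to take a cellular family of \emph{maximum} cardinality rather than merely one that is maximal under inclusion, since only the former forces irreducibility of its members. Once $V^*$ is in hand the rest is routine: the finite-cellularity hypothesis has done its work, and the winning-strategy hypothesis enters solely through the Baire property, after which simply copying \1's moves inside the irreducible set $V^*$ wins automatically.
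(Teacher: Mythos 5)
Your proof is correct and follows essentially the same route as the paper's: both extract from the finite cellularity bound a maximum-size cellular family in $W$, observe that its members are irreducible (so every nonempty open subset is dense), and then win via the Baire property of that irreducible open set (your $V^*$, the paper's $A^*$) with the tactic ``play $V^*$, then copy \1's moves.'' Your version is if anything slightly tidier, since you make explicit the maximality argument for irreducibility that the paper only asserts, and you package the tactic as the single formula $t(U)=U\cap V^*$.
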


\begin{proof} Say $n'=\min\{n\in \omega-\{0\}:$ for every disjoint family $\mathcal{A}\subseteq \tau^*(W),|\mathcal{A}|\leq n \}$. Then, there exists a disjoint  family $\mathcal{A}'\subseteq \tau^*(W)$ such that $|\mathcal{A}'|= n'$ and for any  $A\in \mathcal{A}'$ if $\mathcal{C}\subseteq \tau^*(A)$ is nonempty disjoint, then $|\mathcal{C}|=1$. Now, fix any $A^*\in \mathcal{A}'$. Note that  $A^*$ is Baire space because $A^*$ is an open subset of $X$ and $X$ is Baire space as \2 has a winning strategy in $BM(X)$. Also note that
 any  nonempty open subset of $A^*$  is dense in $A^*$ (i.e., for any $V\in \tau^*(A^*)$, $A^*\subseteq \overline{V}$. Otherwise, $\mathcal{C}=\{V, A^*-\overline{V}\}\subseteq \tau^*(A^*)$ is disjoint and $|\mathcal{C}|=2$).

	Let $\sigma$ be the winning strategy for \2 in $BM(X)$ and suppose \1 makes the move $U_i\supseteq W$ somewhere in an arbitrary  play. \2 responds with the move $V_i=A^*$. After that, for $m>i$, where \1 makes a move $U_m$, then \2 responds with same move i.e., $V_m=U_m$ (Note that this is a 1-tactic for \2.). Thus, this play has formed like this:
	$$(U_0,V_0,\ldots, U_i,V_i={A^*}, U_{i+1},V_{i+1}=U_{i+1}, U_{i+2},V_{i+2}=U_{i+2},\ldots)$$

We know that $U_{i+j}$ nonempty open subset of $A^*$ and $U_{i+j}$ dense in $A^*$ where $1\leq j$. Because $A^*$ is Baire space (and $\{U_{i+j}:j\in \omega-{0}\}$ consists of dense open  subsets), $\emptyset \neq \bigcap_{j\in\omega-\{0\}}U_{i+j}= \bigcap_{j\in\omega}U_j$. It means that \2 wins this play. 
\end{proof}

Note that if $X$ is a $T_2$ space in the theorem above, then $W$ is finite and consists of isolated points. Therefore, the proof becomes much easier, as shown in the following observation. However, we do not assume any separation axioms on the space $X$. 

Additionally, in the above theorem,  $U_i\cap W\neq \emptyset$ is a sufficient condition instead of $U_i\cap W\neq \emptyset$. This follows from the fact that if the hypothesis holds for $W$, then it also holds for the nonempty $U_i\cap W$, i.e., $U_i\supseteq U_i\cap W$,  $U_i\cap W \in \tau^*(X)$ and $|\mathcal{A}|\leq n$ for 
 every disjoint family $\mathcal{A}\subseteq \tau^*(U_i\cap W)$.

\textbf{Observation}: Let  $X$ be a space with a $\pi$-base $\mathcal{B}$.  If $B$ has an isolated point for each  $B\in \mathcal{B}$, then 
\2 has  a 1-tactic in $BM(X)$. To see this for any nonempty open subset $U$ of $X$, take any isolated point $x_U\in U$ and define $\sigma(U)=\{x_U\}$. Obviously, $\sigma$ is a  1-tactic in $BM(X)$ for \2. Thus, this is also a 2-tactic. So, \2 has a 1-tactic in $BM(X)$ if $X$ is a scattered space.
 \begin{flushright}
	\kare
\end{flushright}

We give the following definition which is  necessary for this article.

\begin{definition}

		Let $X$ be a space and $\mathcal{A}\subseteq \tau^*(X)$. Then, we define and call \textbf{the purification of $\mathcal{A}$}, $pur( \mathcal{A})$, as follows:  $pur(\mathcal{A})=\{U\in \mathcal{A}: $ if $W \in \tau^*(U)$, then $W$ has a cellular family of cardinality $\omega\}$.
		
		Note that if $U\in pur(\mathcal{A})$, then $U$ has no isolated points.
		\begin{flushright}\kare	
		
	\end{flushright}
\end{definition}

\begin{corollary}\label{C:3} For any space $X$ the following are equivalent.
	
	(1) $pur(\tau^*(X))=\emptyset$;

	(2) For every $\pi$-base $\mathcal{B}$ of $X$,  $pur(\mathcal{B})=\emptyset$;
	
	(3) There exists a  $\pi$-base $\mathcal{B}$ of $X$ such that $pur(\mathcal{B})=\emptyset$.
	
\end{corollary}

\begin{proof}
This follows immediately from definition above.	
\end{proof}
\begin{remark}\label{r:g}
Let $X$  be a  space. Suppose \2 has a winning strategy in $BM(X)$ and suppose we want to prove the existence of \2's 2-tactic (even k-tactic or k-Markov strategy). Let $U_i$ be a move of \1 in any arbitrary play of $BM(X)$. Then,  without loss of generality, we can suppose that $U_i\in pur(\tau^*(X))$. Otherwise  \2 has a 1-tactic (so, 2-tactic,  k-tactic or k-Markov) after the move $U_i$.
\end{remark}
\begin{proof} Suppose $U_i\notin pur(\tau^*(X))$.
Then, there exists a $W\in \tau^*(U_i)$ such that  $W$ has no cellular family of cardinality $\omega$, i.e., $|\mathcal{A}|<\omega$ for any disjoint family $\mathcal{A}\subseteq \tau^*(W)$, then from Proposition \ref{l:2}, there exists an $n\in \omega$ such that for any disjoint family $\mathcal{A}\subseteq \tau^*(W)$,  $|\mathcal{A}|\leq n$. Thus, from Theorem  \ref{t:1}, \2 has a   1-tactic (so, 2-tactic or others) after the move $U_i$. 	
\end{proof}

Corollary \ref{C:3} and similar arguments as in the proof of Remark \ref{r:g} yield also the following  corollary.
\begin{corollary}\label{c:makale}

 Let $X$  be a  space. Suppose \2 has a winning strategy in $BM(X)$. For a $\pi$-base $\mathcal{B}$ of $X$ if $pur(\mathcal{B})=\emptyset$, then \2 has a 1-tactic (so, 2-tactic) in $BM(X)$. \begin{flushright}
	\kare
\end{flushright}
\end{corollary}

As we mentioned before, the following theorem is one of the main results of this article.  For the differences between  ($\star$) and ($\star\star$)-conditions see Remark \ref{rem}.
 
\begin{theorem}\label{Teorem winning ten 2-tactige ana sonuc 2} 
	Let $X$ be a
	space and  $\mathcal{B}$ a $\pi$-base of $X$ having the following ($\star$)-condition. If \2 has a winning strategy in $BM(X)$, then \2 has a  2-tactic in $BM(X)$. 
	
	($\star$) $pur(\mathcal{B})=\emptyset$  or  $pur(\mathcal{B})$ is   Noetherian and has a 
	Noetherian table $[pur(\mathcal{B})]$ such that
	for any $B\in [pur(\mathcal{B})]$
	there exists a disjoint family $\mathcal{A}_{B}\subseteq \tau^*(B)$ such that 
	$|\mathcal{A}_{B}|\geq \max\{|r(B)|,M_B\}$ where Noetherian union of $pur(\mathcal{B})$ is  $\bigcup_{\theta<\alpha}\mathcal{B}_\theta$ and $M_B=\sup\{|\{V\in  \mathcal{B}_\theta:B\subseteq V, o(V)\leq o(B)\}|:\theta\leq r(B)\}$. 
	
\end{theorem}

\textit{Remark.} This theorem is also true if ($\star$)-condition is replaced by the following ($\star\star$)-condition. Note that where $B\in pur(\mathcal{B})$, $r(B)$ and $o(B)$ depend on the Noetherian table $[pur(\mathcal{B})]$ in ($\star$)-condition while $r(B)$ and $o(B)$ depend on the Noetherian table $[\mathcal{B}]$ in ($\star\star$)-condition.

($\star\star$) $pur(\mathcal{B})=\emptyset$ or where $pur(\mathcal{B})\neq\emptyset$, $\mathcal{B}$ is  Noetherian and has a 
Noetherian table $[\mathcal{B}]$ such that
for  any  $B\in pur(\mathcal{B})$, suppose  $B=U^{o(B)}_{r(B)}\in [\mathcal{B}]$ then,
there exists a disjoint family $\mathcal{A}_{B}\subseteq \tau^*(B)$ such that 
$|\mathcal{A}_{B}|\geq \max\{|r(B)|,M_B\}$ where Noetherian union of $\mathcal{B}$ is  $\bigcup_{\theta<\alpha}\mathcal{B}_\theta$ and $M_B=\sup\{|\{V\in  \mathcal{B}_\theta:B\subseteq V, o(V)\leq o(B)\}|:\theta\leq r(B)\}$.

\begin{proof}  Suppose $pur(\mathcal{B})=\emptyset$. Then,  from Corollary \ref{c:makale} the proof is completed. 
	
	Suppose  $pur(\mathcal{B})\neq \emptyset$. Then, from Remark \ref{r:g}	without loss of generality, we may assume that in each  round of any play if $U_i$ is  \1's move  then $U_i\in pur(\tau^*(X))$.   
	
	Now put a well order on $\mathcal{B}$ and say $U'=\min \{B\in \mathcal{B}: B\subseteq U\}$
	where $U\in \tau^*(X)$. Note that $U'\in pur(\mathcal{B})$ if $U$ is a move of \1 (because $U\in pur(\tau^*(X))$).	
	
	Let  $[pur(\mathcal{B})]$ be an  $\alpha\times\beta$ Noetherian table with the  Noetherian union $\bigcup_{i<\alpha}\mathcal{B}_i$.
	
		To get the  following three definitions take  and fix any  $B\in pur(\mathcal{B})$.

	(1-Definition of $\widehat{B}$)  Say $h(B)=\min \{o(U)\in \beta: U\in pur(\mathcal{B}), U\subseteq B\}$  and $s(B)= \min \{s\in \alpha: U^{h{(B)}}_s\in [pur(\mathcal{B})], U^{h{(B)}}_s\subseteq B\}$. 
	Define  $\widehat{B}=U^{h{(B)}}_{s(B)}\in [pur(\mathcal{B})]$.

	(2-Definitions of $\mathcal{C}_{B,\theta}, m_{B,\theta}$) Take any  $\theta \leq r(B)$.  Say $|\{V\in  \mathcal{B}_\theta:B\subseteq V, o(V)\leq o(B)\}|=m_{B,\theta}$ and  $\mathcal{C}_{B,\theta}=\{V\in  \mathcal{B}_\theta:B\subseteq V, o(V)\leq o(B)\}$. Then we can rewrite  and fix $\mathcal{C}_{B,\theta}=\{V_l:l \in m_{B,\theta}\}$.  
	(So, $M_{B}=\sup\{m_{B,\theta}:\theta \leq r(B)\}$.)

	(3-Definition of $\mathcal{O}_{\widehat{B}}$) Because  $\widehat{B}$ has a cellular family of cardinality $\geq\omega$, by using hypothesis, we can fix a disjoint family $\mathcal{A}_{\widehat{B}}\subseteq \tau^*(\widehat{B})$ such that $|\mathcal{A}_{\widehat{B}}|\geq \max\{ \omega,|r(\widehat{B})|, M_{\widehat{B}}\}$. Thus, by fixing a one to one function $f$ from the set  $\omega\times (r(\widehat{B})+1) \times M_{\widehat{B}}$ to disjoint family 
	$\mathcal{A}_{\widehat{B}}$, we can enumerate the subset  $f[\omega\times (r(\widehat{B})+1) \times M_{\widehat{B}}]$ of $\mathcal{A}_{\widehat{B}}$ such 
	that $f[\omega\times (r(\widehat{B})+1 )\times M_{\widehat{B}}]=\{O_{(j,k,l)}\in \mathcal{A}_{\widehat{B}}:j<\omega,  k<r(\widehat{B})+1,l<M_{\widehat{B}}\}$. Say $\mathcal{O}_{\widehat{B}}=\{O_{(j,k,l)}\in \mathcal{A}_{\widehat{B}}:j<\omega,  k<r(\widehat{B})+1,l<M_{\widehat{B}}\}$.

	Let $\sigma$ be a  winning strategy for \2 in $BM(X)$. Now, we construct
	a  2-tactic $t$ for \2  by describing how s/he
	makes moves in an arbitrary play of $BM(X)$ (Remark: We have supposed that  $U_i\in pur(\tau^*(X))$ for any move $U_i$ of \1).

	To begin first round, \1 makes a move  $U_0\in pur(\tau^*(X))$. Then we have $\widehat{U'_0}\in pur(\mathcal{B})$ and (because of 3-Definition)  a disjoint family  $\mathcal{O}_{\widehat{U'_0}}=\{O^0_{(j,k,l)}\in \mathcal{A}_{\widehat{U'_0}}:j<\omega,  k<r(\widehat{U'_0})+1,l<M_{\widehat{U'_0}}\}$. Define $t(U_0)=\sigma(O^0_{(0,0,0)})$ and \2 plays $t(U_0)$ to complete this round.  
	Thus, originally, this play  part  $$(U_0,t(U_0)=V_0)$$  is formed in $BM(X)$  (this play part will be an arbitrary   play which is  played by \2 using $t$ and this play  will be used to prove that $t$ is a  2-tactic).  In addition, this  play  part \begin{equation}
		(O^0_{(0,0,0)}, \sigma(O^0_{(0,0,0)})=V_0)	
	\end{equation} 
	is also formed in $BM(X)$.

(To easy reading we write $\mathcal{O}_{\widehat{U'_0}}=\{O^0_{(j,k,l)}\in \mathcal{A}_{\widehat{U'_0}}:...\}$ instead of $\mathcal{O}_{\widehat{U'_0}}=\{O_{(j,k,l)}\in \mathcal{A}_{\widehat{U'_0}}:...\}$. As the reader will notice, this $0$ cannot actually be seen. But it is not a problem because  the $0$ of $O^0_{(j,k,l)}$ is not essential for this proof.  We do same thing for the rest of this proof.) 
	
	To begin round 2, \1 makes a move $U_1\subseteq t(U_0)$. \2  sees $U_0$ and $U_1$. So, \2 recognizes  $\mathcal{O}_{\widehat{U'_0}}$. Because
	$\mathcal{O}_{\widehat{U'_0}}$ consists of pairwise disjoint sets (and $U_1\subseteq t(U_0)=\sigma(O^0_{(0,0,0)} )\subseteq O^0_{(0,0,0)}$ ),  $U_1$ is a subset of exactly one of these sets, which is $O^0_{{(0,0,0)}}$. For \2, the code $(0,0,0)$ of $O^0_{(0,0,0)}$ means that it is the beginning of the play. Thus,  by using $\sigma$,  \2 is able to reconstruct the  play part $(O^0_{(0,0,0)}, \sigma(O^0_{(0,0,0)})=V_0)$. Note that from the definition of $h$, $o(\widehat{U'_0})\leq o(\widehat{U'_1})$ and $r(\widehat{U'_0})\leq r(\widehat{U'_1})$.  
	Thus, $\widehat{U'_0}\in\mathcal{C}_{\widehat{U'_1},r(\widehat{U'_0})}=\{V\in  \mathcal{B}_{r(\widehat{U'_0})}:\widehat{U'_1}\subseteq V, o(V)\leq o(\widehat{U'_1})\}$. Since we know
	$\mathcal{C}_{\widehat{U'_1},r(\widehat{U'_0})}=\{V_l:l \in m_{\widehat{U'_1},r(\widehat{U'_0})}\}$, there is an
	$l_0\in m_{\widehat{U'_1},r(\widehat{U'_0})}\leq M_{\widehat{U'_1}}$ 
	such that  $\widehat{U'_0}=V_{l_0}$. So, there exists
	$O^1_{(1,r(\widehat{U'_0}),l_0)}\in\mathcal{O}_{\widehat{U'_1}}=\{O^1_{(j,k,l)}\in \mathcal{A}_{\widehat{U'_1}}:j<\omega,  k<r(\widehat{U'_1})+1,l<M_{\widehat{U'_1}}\}$. 
	Therefore,  \2 makes this move $$t(U_0,U_1)=\sigma(O^0_{(0,0,0)}, V_0, O^1_{(1,r(\widehat{U'_0}),l_0)})=V_1.$$ 
	Thus, originally, this play part  $$(U_0,t(U_0)=V_0, U_1, t(U_0,U_1)=V_1)$$ is formed in $BM(X)$ and this round is completed. 
	Also, this  play part    
	$$(O^0_{(0,0,0)},V_0, O^1_{(1,r(\widehat{U'_0}),l_0)}, \sigma(\ldots, O^1_{(1,r(\widehat{U'_0}),l_0)})=V_1)$$
	is formed  in $BM(X)$.

	To begin any further round, where $m\geq2$,	\1 makes a move $U_m\subseteq V_{m-1}$.  \2  sees $U_m$ and $U_{m-1}$. Then,
	\2 has $\mathcal{O}_{\widehat{U'_{m-1}}}$. Since
	$\mathcal{O}_{\widehat{U'_{m-1}}}$ consists of pairwise disjoint sets and $U_m$ is a 
	subset of exactly one of them, 
	which is
	$O^{m-1}_{(m-1, r(\widehat{U'_{m-2}}), l_{m-2})}$. 
	For \2, the components $m-1$, $r(\widehat{U'_{m-2}})$  and $l_{m-2}$ of $(m-1,r(\widehat{U'_{m-2}}),l_{m-2})$ stand for the round number, the rank of 
	$\widehat{U'_{(m-2)}}$ in  
	$pur(\mathcal{B})$ and the index number of $\widehat{U'_{(m-2)}}$ in $\mathcal{C}_{\widehat{U'_{m-1}},r(\widehat{U'_{m-2}})}$, respectively. So, \2 finds out  $\widehat{U'_{m-2}}=V_{l_{m-2}}\in \mathcal{C}_{\widehat{U'_{m-1}},r(\widehat{U'_{m-2}})}$. Now, since \2 has $\widehat{U'_{m-2}}$ and  $\widehat{U'_{m-1}}$, similarly (because  $\mathcal{O}_{\widehat{U'_{m-2}}}$ is disjoint family,  
	$\widehat{U'_{m-1}}\subseteq O^{m-2}_{(m-2,r(\widehat{U'_{m-3}}),l_{m-3})}$ and so, \2  has the components  of $(m-2,r(\widehat{U'_{m-3}}),l_{m-3})$, then \2  gets $\widehat{U'_{m-3}}=V_{l_{m-3}}\in \mathcal{C}_{\widehat{U'_{m-2}},r(\widehat{U'_{m-3}})}$),
	\2 finds out  $\widehat{U'_{m-3}}$. In a similar manner,  \2 keeps going until to get the code $(0,0,0)$ and  so, \2 also finds out $\widehat{U'_{m-4}},\widehat{U'_{m-5}}, \ldots, \widehat{U'_1},\widehat{U'_0}$   and
	$O^{m-4}_{(m-4,r(\widehat{U'_{m-5}}),l_{m-5})},\ldots, O^{1}_{(1,r(\widehat{U'_{0}}),l_0)}, O^{0}_{(0,0,0)}$.
	Therefore, by using $\sigma$, \2 is able
	to reconstruct the play part 
	$$(O^0_{(0,0,0)},V_0, O^1_{(1,r(\widehat{U'_{0}}),l_0)}, \ldots, O^{m-1}_{(m-1,r(\widehat{U'_{m-2}}),l_{m-2})}, V_{m-1})$$
	where $V_i=\sigma(O^0_{(0,0,0)},\ldots, O^i_{(i,r(\widehat{U'_{i-1}}),l_{i-1})})$ 
	for all $0<i\leq m-1$ and $V_0=\sigma(O^0_{(0,0,0)})$. Note that from definition of $h$ (in 1-Definition) there exists $l_{m-1}\in m_{\widehat{U'_{m}},r(\widehat{U'_{m-1}})}\le M_{\widehat{U'_{m}}}$ 
	such that  $\widehat{U'_{m-1}}=V_{l_{m-1}}\in \mathcal{C}_{\widehat{U'_{m}},r(\widehat{U'_{m-1}})}$  
	because $\widehat{U'_{m-1}}\supseteq \widehat{U'_{m}}$. So, $O^m_{(m,r(\widehat{U'_{m-1}}),l_{m-1})}\in \mathcal{O}_{\widehat{U'_m}}=\{O^m_{(j,k,l)}\in \mathcal{A}_{\widehat{U'_m}}:j<\omega,  k<r(\widehat{U'_m})+1,l<M_{\widehat{U'_m}}\}$. So, \2 can  make this move $$t(U_{m-1},U_m)=\sigma(O^0_{(0,0,0)},V_0, O^1_{(1,r(\widehat{U'_{0}}),l_0)}, \ldots, V_{m-1}, O^{m}_{(m,r(\widehat{U'_{m-1}}),l_{m-1})})=V_m.$$ 
	
	Eventually, this play in which \2 plays with $t$  will be formed like this:  $$(U_0,t(U_0)=V_0, U_1,  t(U_0,U_1)=V_1,U_2, t(U_1,U_2)=V_2,\ldots)$$ in $BM(X)$ (to  see  $t$ is a  2-tactic for \2, we need to prove that $\emptyset\neq \bigcap_{i\in \omega}V_i$). 
	In addition, there  will be a  play   
	
	$$(O^0_{(0,0,0)},V_0, O^1_{(1,r(\widehat{U'_{0}}),l_0)}, V_1, O^{2}_{(2,r(\widehat{U'_{1}}),l_1)}, V_{2},\ldots)$$ where $V_0=\sigma(O^0_{(0,0,0)})$ and  $V_i=\sigma(O^0_{(0,0,0)},\ldots, O^{i}_{(i,r(\widehat{U'_{i-1}}),l_{i-1})})$ for all $0< i< \omega$ in $BM(X)$. Since $\sigma$ is a winning strategy for \2, from the second  play above, $\emptyset\neq \bigcap_{i\in \omega}V_i$.

\end{proof}

We use in the proof above a technique which is one of the main techniques used when one strategy is derived from another in topological games.  The main idea is to produce two entwined plays, one to be played with known strategy and the other with the strategy to be defined. 

\begin{remark}\label{rem} ($\star$) and ($\star\star$)-conditions can be different from each other.  Because, from Remark \ref{bitmedi} and Example \ref{gitti}, we  know that a Noetherian table of a Noetherian family $\mathcal{A}$ can be different from a Noetherian table of a subfamily of $\mathcal{A}$, even though  from Theorem \ref{yeter} we know something about them. Note that $[\mathcal{B}]$ is a Noetherian table of $\mathcal{B}$ and $[pur(\mathcal{B})]$ is a Noetherian table of this subfamily  $pur(\mathcal{B})\subseteq \mathcal{B}$. Also, there is more to these conditions than a subfamily. The same is valid for $\dagger$ and $\dagger\dagger$ in Corollary \ref{Teorem winning ten 2-tactige ana sonuc}.
\begin{flushright}
	\kare
\end{flushright}	
\end{remark}

The following corollary states that any Galvin  $\pi$-base can be modified to satisfy the hypothesis of Theorem \ref{Teorem winning ten 2-tactige ana sonuc 2}. So, Theorem \ref{Teorem winning ten 2-tactige ana sonuc 2} includes  the Galvin's theorem (see the introduction of this article for the Galvin's theorem).  Most likely, Theorem \ref{Teorem winning ten 2-tactige ana sonuc 2} says more than the Galvin's theorem. But to show this we need a proper example. We left it as an open question (see Question \ref{Galvin}).

\begin{corollary}\label{vin}
	If any $\pi$-base $\mathcal{B}$ for a space $X$ has the following (*)-property (the hypothesis of Galvin's theorem (in our terminology a Galvin $\pi$-base)), then $X$ has a $\pi$-base $\mathcal{B}'\subseteq \mathcal{B}$ such that $\mathcal{B}'$ has ($\star$) and ($\star\star$)-conditions of Theorem \ref{Teorem winning ten 2-tactige ana sonuc 2}.
	
	(*) for any $B\in \mathcal{B}$  there exists a cellular family 
	$\mathcal{A}_B\subseteq \tau(B)$  such that $|\mathcal{A}_B|\geq  |\{V \in\mathcal{B}: B \subseteq V \}|$.
\end{corollary}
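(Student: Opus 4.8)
The plan is to first replace $\mathcal{B}$ by a Noetherian $\pi$-base and then check that the cellular families furnished by the (*)-property are already large enough to satisfy both ($\star$) and ($\star\star$) with no further modification. First I would invoke Corollary \ref{k} to fix a Noetherian $\pi$-base $\mathcal{B}'\subseteq \mathcal{B}$. If $pg(\mathcal{B}')=\emptyset$, then the first disjunct of both ($\star$) and ($\star\star$) holds and there is nothing more to prove (indeed, by Corollary \ref{C:3} this occurs exactly when $pg(\tau^*(X))=\emptyset$). So I would pass to the case $pg(\mathcal{B}')\neq\emptyset$; since $pg(\mathcal{B}')\subseteq\mathcal{B}'$ and $\mathcal{B}'$ is Noetherian, Theorem \ref{t:2} guarantees that $pg(\mathcal{B}')$ is Noetherian, so the tables $[pg(\mathcal{B}')]$ and $[\mathcal{B}']$ both make sense and ($\star$), ($\star\star$) can be tested against them.

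The central observation is that the cellular family guaranteed by the (*)-property does not refer to any Noetherian table, so a single choice of $\mathcal{A}_B$ can serve both conditions at once. Concretely, for each $B\in pg(\mathcal{B}')$ we have $B\in\mathcal{B}$ (as $\mathcal{B}'\subseteq\mathcal{B}$), so (*) yields a cellular family $\mathcal{A}_B\subseteq\tau^*(B)$ with $|\mathcal{A}_B|\geq|\{V\in\mathcal{B}:B\subseteq V\}|$. I would fix this $\mathcal{A}_B$ once and for all and then use it in verifying both ($\star$) (with ranks and orders read off from $[pg(\mathcal{B}')]$) and ($\star\star$) (with ranks and orders read off from $[\mathcal{B}']$).

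It then remains to bound $\max\{|r(B)|,M_B\}$ by $|\{V\in\mathcal{B}:B\subseteq V\}|$ inside each table; the two arguments are completely parallel. For the $M_B$ term, every set $\{V\in\mathcal{B}_\theta:B\subseteq V,\ o(V)\leq o(B)\}$ appearing in the defining supremum is contained in $\{V\in\mathcal{B}':B\subseteq V\}$, hence in $\{V\in\mathcal{B}:B\subseteq V\}$, giving $M_B\leq|\{V\in\mathcal{B}:B\subseteq V\}|\leq|\mathcal{A}_B|$. For the $|r(B)|$ term I would appeal to Lemma \ref{L: N}: if $r(B)=i$ (computed in $pg(\mathcal{B}')$ for ($\star$), or in $\mathcal{B}'$ for ($\star\star$)), then for every $j<i$ there is an element of the relevant family that is a proper superset of $B$ and sits on level $j$; these are pairwise distinct because they occupy distinct levels, so they constitute $|i|$ many proper supersets of $B$, all lying in $\mathcal{B}$. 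Thus $|r(B)|\leq|\{V\in\mathcal{B}:B\subseteq V\}|\leq|\mathcal{A}_B|$. Combining the two estimates gives $|\mathcal{A}_B|\geq\max\{|r(B)|,M_B\}$, which is precisely what ($\star$) and ($\star\star$) require, so the same $\mathcal{B}'$ satisfies both.

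I expect the only real subtlety to be the $|r(B)|$ bound: it is not visible directly in the (*)-property, and the decisive point is that Lemma \ref{L: N} manufactures one proper superset of $B$ for each rank level strictly below $r(B)$, so that the height of $B$ in the Noetherian order is itself witnessed by enough supersets to be absorbed into $\mathcal{A}_B$. Everything else is bookkeeping, and the efficiency of the argument comes from the fact that the table-independent quantity $|\{V\in\mathcal{B}:B\subseteq V\}|$ simultaneously dominates the data extracted from both $[pg(\mathcal{B}')]$ and $[\mathcal{B}']$.
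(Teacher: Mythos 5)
Your proposal is correct and follows essentially the same route as the paper's own proof: pass to a Noetherian $\pi$-base $\mathcal{B}'\subseteq\mathcal{B}$ via Corollary \ref{k}, dispose of the case $pg(\mathcal{B}')=\emptyset$, and then bound $\max\{|r(B)|,M_B\}$ by $|\{V\in\mathcal{B}:B\subseteq V\}|$, using Lemma \ref{L: N} to produce $|r(B)|$ many distinct supersets of $B$ and a direct containment argument for $M_B$. The paper argues identically, including the observation that one choice of $\mathcal{A}_B$ and one table-independent bound serve both ($\star$) and ($\star\star$) simultaneously.
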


\begin{proof}

	From Corollary \ref{k}, there exist a Noetherian $\pi$-base $\mathcal{B}'\subseteq \mathcal{B}$. Fix any  Noetherian table  $[\mathcal{B}']$  of
	 $\mathcal{B'}$. 
	(Note that from Theorem \ref{yeter}  if the subfamily $pur(\mathcal{B'})$  of $\mathcal{B'}$   is not emptyset, then $pur(\mathcal{B'})$ has  a  Noetherian table $[pur(\mathcal{B'})]$, so fix any  Noetherian table  $[pur(\mathcal{B'})]$  of
	$pur(\mathcal{B'})$)

We will see that $\mathcal{B'}$ has the conditions of Theorem \ref{Teorem winning ten 2-tactige ana sonuc 2}.	If  $pur(\mathcal{B'})=\emptyset$   then the proof is completed. If not, fix any $B\in pur(\mathcal{B'})$. From (*)-property there exists a disjoint family $\mathcal{A}_{B}\subseteq \tau^*(B)$ such that $|\mathcal{A}_{B}|\geq  |\{V \in\mathcal{B}: B \subseteq V \}|$. (To see $\mathcal{B'}$ has ($\star$)-condition, say $B=U^{o'(B)}_{r'(B)}\in [pur(\mathcal{B'})]$ or to see $\mathcal{B'}$ has ($\star\star$)-condition, say $B=U^{o(B)}_{r(B)}\in [\mathcal{B'}]$. We will use $r(B)$ 
and $o(B)$, so, the proof will be valid for both conditions (($\star$) or in the remark ($\star\star$)) of Theorem \ref{Teorem winning ten 2-tactige ana sonuc 2}.)  Firstly, we will  see that $|\mathcal{A}_{B}|\geq |r(B)|$.  If $r(B)=0$, then it is clear.  If $r(B)>0$, from Lemma \ref{L: N}, for any $0\leq i<r(B)$, we can
 choose a $B_i\in \{V \in\mathcal{B}: B \subseteq V \}$ such that $r(B_i)=i$  and so, the function $f:r(B)\rightarrow\{V \in\mathcal{B}: B \subseteq V \}$,  $f(i)=B_i$,   is one to one. Thus, $|\{V \in\mathcal{B}: B \subseteq V \}|\geq |r(B)|$. So, $|\mathcal{A}_{B}|\geq |r(B)|$. Now, for any $\theta\leq r(B)$, 
	$\{V\in  \mathcal{B}_\theta:B\subseteq V, o(V)\leq o(B)\}\subseteq \{V \in\mathcal{B}: B \subseteq V \}$. Thus, $M_B=\sup\{|\{V\in  \mathcal{B}_\theta:B\subseteq V, o(V)\leq o(B)\}|:\theta\leq r(B)\}\leq |\{V \in\mathcal{B}: B \subseteq V \}|$. Therefore, $|\mathcal{A}_{B}|\geq M_B$. Hence, $|\mathcal{A}_{B}|\geq \max\{|r(B)|,M_B\}$.
			
\end{proof}

We give two different proofs  for Corollary \ref{Teorem winning ten 2-tactige ana sonuc}.  Our purpose in making the second proof  is to show that Noetherian bases and tables provide an another proof way which is  somewhat  different from the way used in the proof of Theorem \ref{Teorem winning ten 2-tactige ana sonuc 2}.

\begin{corollary}\label{Teorem winning ten 2-tactige ana sonuc} 
Let $X$ be a space and  $\mathcal{B}$ a $\pi$-base of $X$ having the following ($\dagger$)-condition. If \2 has a winning strategy in $BM(X)$, then \2 has a  2-tactic in $BM(X)$.

($\dagger$)  $pur(\mathcal{B})=\emptyset$  or  $pur(\mathcal{B})$ is   Noetherian and has a 
Noetherian table $[pur(\mathcal{B})]$ such that
for any $B\in [pur(\mathcal{B})]$
there exists a disjoint family $\mathcal{A}_{B}\subseteq \tau^*(B)$ such that 
$|\mathcal{A}_{B}|\geq \max\{|r(B)|,|o(B)|\}$.

\end{corollary}

\textit{Remark.} This theorem is also true if ($\dagger$)-condition is replaced by the following ($\dagger\dagger$)-condition. Note that where $B\in pur(\mathcal{B})$, $r(B)$ and $o(B)$ depend on the Noetherian table $[pur(\mathcal{B})]$ in ($\dagger$)-condition while $r(B)$ and $o(B)$ depend on the Noetherian table $[\mathcal{B}]$ in ($\dagger\dagger$)-condition.

($\dagger\dagger$) $pur(\mathcal{B})=\emptyset$ or where $pur(\mathcal{B})\neq\emptyset$, $\mathcal{B}$ is  Noetherian and has a 
Noetherian table $[\mathcal{B}]$ such that
for  any  $B\in pur(\mathcal{B})$, suppose  $B=U^{o(B)}_{r(B)}\in [\mathcal{B}]$ then,
there exists a disjoint family $\mathcal{A}_{B}\subseteq \tau^*(B)$ such that 
$|\mathcal{A}_{B}|\geq \max\{|r(B)|,|o(B)|\}$.

\begin{proof}

	(\textbf{Proof 1}) We will see that $\mathcal{B}$ satisfies conditions of Theorem \ref{Teorem winning ten 2-tactige ana sonuc 2}. If $pur(\mathcal{B})=\emptyset$, the proof is completed. If not,
take any	$B\in pur(\mathcal{B})$ then from the hypothesis and definition of $pur(\mathcal{B})$,  there exists a disjoint family 
	$\mathcal{A}_B\subseteq \tau^*(B)$  such that $|\mathcal{A}_B|\geq  \max\{|r(B)|,|o(B)|, \omega\}$. Say $\theta\leq r(B)$. Because  
the function $f:\{V \in\mathcal{B}_\theta: B \subseteq V, o(V)\leq o(B) \}\rightarrow o(B)+1$,  $f(V)=o(V)$,   is one to one, $|\{V \in\mathcal{B}_\theta: B \subseteq V, o(V)\leq o(B)  \}|\leq |o(B)+1|$. Thus, $M_B=\sup\{|\{V\in \mathcal{B}_\theta:B\subseteq V, o(V)\leq o(B) \}|:\theta\leq r(B)\}\leq |o(B)+1|$. So, $|\mathcal{A}_{B}|\geq M_B$ because $|\mathcal{A}_{B}|\geq \max\{|o(B)|,\omega\}$. Hence $|\mathcal{A}_{B}|\geq \max\{|r(B)|,M_B\}$. Therefore, from Theorem \ref{Teorem winning ten 2-tactige ana sonuc 2}, the proof is completed.

		(\textbf{Proof 2}) From Corollary \ref{c:makale}, the proof is completed where $pur(\mathcal{B})=\emptyset$.
		Suppose $pur(\mathcal{B})\neq \emptyset$. Let  $[pur(\mathcal{B})]$ be an  $\alpha\times\beta$ Noetherian table and say Noetherian union of $pur(\mathcal{B})$ is	$\bigcup_{i<\alpha}\mathcal{B}_i$.
Say any $i$-th row of  $[pur(\mathcal{B})]$ is $(B^0_i,B^1_i, B^2_i, \ldots)$.

Take any $B^j_i\in [pur(\mathcal{B})]$. From the hypothesis and definition of $pur(\mathcal{B})$,  there exists a disjoint family 
$\mathcal{A}_{B^j_i}\subseteq \tau^*(B)$  such that $|\mathcal{A}_{B^j_i}|\geq  \max\{|r(B)|,|o(B)|, \omega\}$. So, by fixing a one to one function $f$ from the set  $\bigcup_{n\in \omega-\{0\}}((i+1) \times (j+1))^n$ to disjoint family 
$\mathcal{A}_{B^j_i}$  where $((i+1) \times (j+1))^n=\{((a_1,b_1), (a_2,b_2),\ldots,(a_n,b_n)):\forall 1\leq k\leq n(a_k<i+1,  b_k<j+1)\}$, we can enumerate the subset  $f[\bigcup_{n\in \omega-\{0\}}((i+1) \times (j+1))^n]$ of $\mathcal{A}_{B^j_i}$ such 
 that $f[\bigcup_{n\in \omega-\{0\}}((i+1) \times (j+1))^n]=\{O_{((a_1,b_1), (a_2,b_2),\ldots,(a_n,b_n))}\in \mathcal{A}_{B^j_i}:n \in \omega- \{0\} ,\forall 1\leq k\leq n (a_k<i+1,  b_k<j+1) \}$. Say $\mathcal{O}_{B^j_i}=\{O_{((a_1,b_1), (a_2,b_2),\ldots,(a_n,b_n))}\in \mathcal{A}_{B^j_i}:n \in \omega- \{0\} ,\forall 1\leq k\leq n (a_k<i+1,  b_k<j+1) \}$. Note that $i$, $j$ are ordinals and
 $i + 1$, $j + 1$ are ordinal successors of $i$ and $j$.

 Now, take any $B^j_i\in [pur(\mathcal{B})]$ say $h(B^j_i)=\min \{o(B)\in \beta: B\in pur(\mathcal{B}), B\subseteq B^j_i\}$  and $s(B^j_i)= \min \{s\in \alpha: B^{h{(B^j_i)}}_s\in [pur(\mathcal{B})], B^{h{(B^j_i)}}_s\subseteq B^j_i\}$. 
 Define  $\widehat{B^j_{i}}=B^{h(B^j_i)}_{s{(B^j_i)}}$.

Now, put a well order on $\mathcal{B}$ and say $U'=\min \{B\in \mathcal{B}: B\subseteq U\}$
where $U\in \tau^*(X)$. From Remark \ref{r:g},	without loss of generality, we may assume that in each  round of any play if $U_i$ is a move  of \1,  then $U_i\in pur(\tau^{*}(X))$. 
Note that $U'_i\in pur(\mathcal{B})$ if $U_i$ is a move of \1 because $U_i \in pur(\tau^{*}(X))$

From now on,  where $i\leq n$, throughout this proof for abbreviation  $a_i\asymp b_n$ and $r(\widehat{U'_i})\asymp o(\widehat{U'_{n}})$  stand for  $(a_i,b_i), (a_{i+1},b_{i+1}),(a_{i+2},b_{i+2}), \ldots,(a_n,b_n)$ and $(r(\widehat{U'_{i}}),o(\widehat{U'_{i}})),(r(\widehat{U'_{i+1}}),o(\widehat{U'_{i+1}})), \ldots,(r(\widehat{U'_{n}}),o(\widehat{U'_{n}}))$, respectively.

Let $\sigma$ be a  winning strategy for \2 in $BM(X)$. Now, we construct
a   2-tactic $t$ for \2  by describing how s/he
makes moves in an arbitrary play of $BM(X)$.

To begin first round, \1 makes a move  $U_0\in pur(\tau^{*}(X))$. Then  we have $\widehat{U'_0}\in pur(\mathcal{B})$ (note that $U'_0=B^j_i\in [pur(\mathcal{B})]$ for some ordinals $i,j$ and $\widehat{U'_0}=\widehat{B^j_i}=B^{h(B^j_i)}_{s(B^j_i)}\in [pur(\mathcal{B})]$)
 and a disjoint family  $\mathcal{O}_{\widehat{U'_0}}=\{O^0_{(a_1\asymp b_n)}:n \in \omega- \{0\},\forall 1\leq k\leq n, (a_k<r(\widehat{U'_0})+1,  b_k<o(\widehat{U'_0})+1) \}\subseteq \mathcal{A}_{\widehat{U'_0}}$. Define $t(U_0)=\sigma(O^0_{(r(\widehat{U'_0}),o(\widehat{U'_0}))})$ and \2 plays $t(U_0)$ to complete this round.  
Thus, originally, this play  part  $$(U_0,t(U_0)=V_0)$$ is formed in $BM(X)$  (this play part will be an arbitrary   play which is  played by \2 using $t$ and this play  will be used to prove that $t$ is a  2-tactic).  In addition, this  play  part \begin{equation}\label{1}
 (O^0_{(r(\widehat{U'_0}),o(\widehat{U'_0}))}, \sigma(O^0_{(r(\widehat{U'_0}),o(\widehat{U'_0}))})=V_0)	
\end{equation} 
is also formed in $BM(X)$.

To begin round 2, \1 makes a move $U_1\subseteq t(U_0)$. \2  sees $U_0$ and $U_1$. So, \2 recognizes  $\mathcal{O}_{\widehat{U'_0}}$. Because
$\mathcal{O}_{\widehat{U'_0}}$ consists of pairwise disjoint sets (and $U_1\subseteq t(U_0)=\sigma(O^0_{(r(\widehat{U'_0}),o(\widehat{U'_0}))} )\subseteq O^0_{(r(\widehat{U'_0}),o(\widehat{U'_0}))}$ ),  $U_1$ is a subset of exactly one of these sets, which is $O^0_{(r(\widehat{U'_0}),o(\widehat{U'_0}))}$. Note that   $(r(\widehat{U'_0}),o(\widehat{U'_0}))$ is coordinates of $\widehat{U'_0}$ in $[pur(\mathcal{B})]$, i.e., $\widehat{U'_0}=B^{o(\widehat{U'_0})}_{r(\widehat{U'_0})}$. Since no more coordinates, \2 only has  $\widehat{U'_0}$ and so,  \2 is able to reconstruct play part (1) by using $\sigma$. Note that $o(\widehat{U'_0})\leq o(\widehat{U'_1})$ and $r(\widehat{U'_0})\leq r(\widehat{U'_1})$ from the definitions of $h$ and $s$. Thus, $O^1_{(r(\widehat{U'_0})\asymp o(\widehat{U'_1}))}\in\mathcal{O}_{\widehat{U'_1}}=\{O^1_{(a_1\asymp b_n)}\in \mathcal{A}_{\widehat{U'_1}}:n \in \omega- \{0\},\forall 1\leq k\leq n, (a_k<r(\widehat{U'_1})+1,  b_k<o(\widehat{U'_1})+1)\}$ 
 and so,  \2 makes this move $$t(U_0,U_1)=\sigma(O^0_{(r(\widehat{U'_0}),(o(\widehat{U'_0}))}, V_0, O^1_{(r(\widehat{U'_0})\asymp o(\widehat{U'_1}))})=V_1.$$ 
 Thus, originally, this play part  $$(U_0,t(U_0)=V_0, U_1, t(U_0,U_1)=V_1)$$ is formed in $BM(X)$ and this round is completed. Also, this  play part    
$$(O^0_{(r(\widehat{U'_0}),o(\widehat{U'_0}))},V_0, O^1_{(r(\widehat{U'_0})\asymp o(\widehat{U'_1}))}, \sigma(\ldots, O^1_{(r(\widehat{U'_0})\asymp o(\widehat{U'_1}))})=V_1)$$
is formed  in $BM(X)$.

All further rounds are played in a similar manner to round 2. 
\1 makes a move $U_m\subseteq V_{m-1}$.  \2  sees $U_m$ and $U_{m-1}$. Then,
\2 has $\mathcal{O}_{\widehat{U'_{m-1}}}$. Since
$\mathcal{O}_{\widehat{U'_{m-1}}}$ consists of pairwise disjoint sets, $U_m$ is a 
subset of exactly one of them, 
which is
$O^{m-1}_{(r(\widehat{U'_0})\asymp o(\widehat{U'_{m-1}}))}$. 
Note that  $r(\widehat{U'_0})\asymp o(\widehat{U'_{m-1}})=(r(\widehat{U'_0}),o(\widehat{U'_0})),\ldots,(r(\widehat{U'_{m-1}}),o(\widehat{U'_{m-1}}))$ and each $(r(\widehat{U'_k}),o(\widehat{U'_k}))$ is coordinates of $\widehat{U'_k}$ in $[pur(\mathcal{B})]$, i.e., $\widehat{U'_k}=B^{o(\widehat{U'_k})}_{r(\widehat{U'_k})}$. Thus, \2 notices $\widehat{U'_0},\widehat{U'_1},\ldots,\widehat{U'_{m-1}}$ and so, by using disjonity of $\mathcal{O}_{\widehat{U'_k}}$ families,  also notices $O^0_{(r(\widehat{U'_0}),o(\widehat{U'_0}))}, O^1_{(r(\widehat{U'_0})\asymp o(\widehat{U'_1}))}, \ldots, O^{m-1}_{(r(\widehat{U'_0})\asymp o(\widehat{U'_{m-1}}))}$. Therefore \2 is able
 to reconstruct the play part 
$$(O^0_{(r(\widehat{U'_0}),o(\widehat{U'_0}))},V_0, O^1_{(r(\widehat{U'_0})\asymp o(\widehat{U'_1}))}, \ldots, O^{m-1}_{(r(\widehat{U'_0})\asymp o(\widehat{U'_{m-1}}))}, V_{m-1})$$
where $V_i=\sigma(O^0_{(r(\widehat{U'_0}),o(\widehat{U'_0}))},\ldots, O^i_{(r(\widehat{U'_i})\asymp o(\widehat{U'_i}))})$ 
for all $0\leq i\leq m-1$. Note  that $O^m_{(r(\widehat{U'_0})\asymp o(\widehat{U'_m}))}\in \mathcal{O}_{\widehat{U'_m}}=\{O^m_{((a_1\asymp b_m))}:n \in \omega- \{0\},\forall 1\leq k\leq n, (a_k<r(\widehat{U'_m})+1,  b_k<o(\widehat{U'_m})+1)\}$. So, \2 can  make this move $V_m=t(U_{m-1},U_m)$ where $V_m=\sigma(O^0_{(r(\widehat{U'_0}),o(\widehat{U'_0}))},V_0,\ldots,$ $ O^{m-1}_{(r(\widehat{U'_0})\asymp o(\widehat{U'_{m-1}}))},V_{m-1}, O^m_{(r(\widehat{U'_0})\asymp o(\widehat{U'_m}))}))$. 

Eventually, this play in which \2 plays with $t$  will be formed like this:  $$(U_0,t(U_0)=V_0, U_1,  t(U_0,U_1)=V_1,U_2, t(U_1,U_2)=V_2,\ldots)$$ in $BM(X)$ (to  see  $t$ is a  2-tactic for \2, we need to prove that $\emptyset\neq \bigcap_{i\in \omega}V_i$). 
In addition, there  exists a  play   

$$(O^0_{(r(\widehat{U'_0}),o(\widehat{U'_0}))}, V_0, O^1_{(r(\widehat{U'_0})\asymp o(\widehat{U'_1}))},V_1, O^2_{(r(\widehat{U'_0})\asymp o(\widehat{U'_2}))}, \ldots)$$ where $V_i=\sigma(O^0_{(r(\widehat{U'_0}),o(\widehat{U'_0}))},\ldots, O^i_{(r(\widehat{U'_0})\asymp o(\widehat{U'_i}))})$ for all $0\leq i< \omega$ in $BM(X)$. Since $\sigma$ is a winning strategy for \2, from the second  play above, $\emptyset\neq \bigcap_{i\in \omega}V_i$. 
\end{proof}

It is the corollary above that prompted the writing of this article. Before almost all other results, we sensed Corollary \ref{Teorem winning ten 2-tactige ana sonuc} and we proved it. Then Theorem \ref{Teorem winning ten 2-tactige ana sonuc 2} was revealed. We thought of removing Corollary \ref{Teorem winning ten 2-tactige ana sonuc} from this paper. But we decided that it would be better to keep it  since it has a different use of the Noetherian $\pi$-bases in its proof and one of the purposes of this article is to show that Noetherian families are  useful for topological games. In addition, Corollary \ref{Teorem winning ten 2-tactige ana sonuc} may   be used for revealing the difference between  Theorem \ref{Teorem winning ten 2-tactige ana sonuc 2} and the Galvin's theorem (see Section \ref{Ack sorular}). 

\begin{remark}\label{R:1}
Let us be given any $\pi$-base $\mathcal{B}$ of a space $X$. If $\mathcal{B}$ satisfies the conditions of Corollary \ref{Teorem winning ten 2-tactige ana sonuc}, then $\mathcal{B}$ satisfies the conditions of Theorem \ref{Teorem winning ten 2-tactige ana sonuc 2}. Because the proof-1 of Corollary \ref{Teorem winning ten 2-tactige ana sonuc} implies that. \begin{flushright}
	\kare
\end{flushright}
\end{remark}
 
 The following lemma might be familiar to some.
 
 \begin{lemma} \label{key1}
 	Let $\mathcal{B}$ be a $\pi$-base of a space  $X$.  Then, there exists a  $\subseteq$-maximal disjoint subfamily $\mathcal{A}$ of  $\mathcal{B}$. (Note that then $X= \overline{ \cup \mathcal{A}}$.)

 \end{lemma}
 
  \begin{proof}
    Let $|\mathcal{B}|=\kappa$ and $\mathcal{B}=\{O_i: i\in \kappa\}$. 

 A Choosing  Procedure: For $i=0$, define $m_0=0$ and $\mathcal{A}_0=\{O_0\}=\{O_{m_0}\}$.  For $i=1$, if $\overline{ \cup \mathcal{A}_0}=X$, then define $\mathcal{A}=\bigcup_{j<1}\mathcal{A}_j$ and finish the procedure. If not, set $m_1=\min \{t\in \kappa: O_t\subseteq X- \overline{\cup \bigcup_{j<1}\mathcal{A}_j}\}$ and define $\mathcal{A}_1=\{O_{m_1}\}\cup(\bigcup_{j<1}\mathcal{A}_j)$. Now, for $i \in \kappa$ where $1\leq j<i< \kappa$  for all $j$,   suppose $\mathcal{A}_j$  has been  defined and the procedure was not finished.   If $\overline{\cup\bigcup_{j<i}\mathcal{A}_j}=X$, then define $\mathcal{A}=\bigcup_{j<i}\mathcal{A}_j$ and finish   the procedure. If not, set  $m_i=\min \{t\in \kappa: O_t\subseteq X- \overline{ \cup \bigcup_{j<i}\mathcal{A}_j}\}$ and define $\mathcal{A}_i=\{O_{m_i}\}\cup(\bigcup_{j<i}\mathcal{A}_j)$.
 
  Thus, either this procedure will finish for some $i \in \kappa$, in which case we obtain $\mathcal{A} = \bigcup_{j < i} \mathcal{A}_j$, or it will continue for all $i \in \kappa$. If it continues for all $i \in \kappa$, then define $\mathcal{A} = \bigcup_{i < \kappa} \mathcal{A}_i$. In any case, we obtain the family $\mathcal{A}\subseteq \mathcal{B}$.
 
 In the procedure above, by using transfinite induction, it can be seen that \(i \leq m_i\) and \(O_i \cap (\cup \mathcal{A}_i) \neq \emptyset\) for all \(i \in \kappa\) where this procedure has not finished for \(i\). Thus, it is straightforward to see that  $\mathcal{A}$   is  disjoint and $\subseteq$-maximal subfamily of  $\mathcal{B}$ and so,  $X= \overline{ \cup \mathcal{A}}$.

   \end{proof}

\begin{theorem}\label{ne}
For any space $X$ the followings are equivalent.

(i) \( X \) has a Galvin \(\pi\)-base, i.e., \( X \) has a \(\pi\)-base that satisfies the (*)-property.

 (ii) The collection of all $O\in \tau^*(X)$ with the following "property" forms a $\pi$-base for $X$.
 
"There exists a $\pi$-base \(\mathcal{B}_O\) for the subspace \( O \) such that for all \( W \in \tau^*(O) \), there exists a  disjoint family $\mathcal{A}\subseteq \tau^*(O)$ satisfying $|\{ B \in \mathcal{B}_O : W \subseteq B \}|\leq |\mathcal{A}|.$"

(iii) Any $U\in \tau^*(X)$ has the "property" in (ii).

\end{theorem}

\begin{proof}
	$(i\Rightarrow ii)$ Let $\mathcal{B}$ be a Galvin base for $X$. We will see that any  $O \in\mathcal{B}$ satisfies the conditions of (ii). For any $O \in \mathcal{B}$, define $\mathcal{B}_O=\{B\in \mathcal{B}: B\subseteq O\}$. Then,    $\mathcal{B}_O$ is a $\pi$-base for subspace $O$. For any \( W \in \tau^*(O) \), there is a $W'\in  \mathcal{B}$ such that $W'\subseteq W$. Because $ \mathcal{B}$ is a Galvin base, there exists a disjoint family \(\mathcal{A}_{W'} \subseteq \tau^*(W') \)   satisfying \( |\{ B \in \mathcal{B} : W' \subseteq B \}| \leq |\mathcal{A}_{W'}| \). Let $\mathcal{A}_{W'}=\mathcal{A}$. 
   	Because $\{ B \in \mathcal{B}_O : W \subseteq B \}\subseteq \{ B \in \mathcal{B} : W' \subseteq B \}$,  $|\{ B \in \mathcal{B}_O : W \subseteq B \}|\leq |\mathcal{A}|$.

	$(ii\Rightarrow i)$ 	
For any $W\in \tau^*(X)$, set $\min\{S(U): U\in \tau^*(W)\}=\gamma_W$. Then, choose  a $U_W\in \tau^*(W)$ such that $S(U_W)=\gamma_W$. Thus, for any nonempty open subset $B\subseteq U_W$,   $S(B)=S(U_W)$.
 	 Now,
	 define $\{U_{W}: W \in \tau^*(X) \}$. From Lemma \ref{key1}, there exists a disjoint,  $\subseteq$-maximal  subfamily $\mathcal{U}$ of $\{U_{W}: W \in \tau^*(X) \}$ satisfying $\overline{\cup \mathcal{U}}=X.$

Let $\mathcal{B}$ be the $\pi$-base for $X$ such that  each element of $\mathcal{B}$  has the "property" in $(ii)$.  For any $U\in \mathcal{U}$, define $\mathcal{B}(U)=\{O\in\mathcal{B}: O\subseteq U \}$. Because 	$\mathcal{B}(U)$ is a $\pi$-base of $U$, from Lemma \ref{key1}, there exists a disjoint subfamily $\mathcal{\mathcal{D}_U}$ of $\mathcal{B}(U)$ satisfying $U\subseteq \overline{\cup \mathcal{D}_U}.$ Note
  that $\mathcal{D}_U\subseteq \mathcal{B}(U)\subseteq \mathcal{B}.$ So, from the hypothesis,  for any \( O \in\mathcal{D}_U \), there exists a \(\pi\)-base \(\mathcal{B}_O\) for  \( O \) satisfying  the conditions of the hypothesis. 
 Now, set $\mathcal{F}=\bigcup_{U\in \mathcal{U}} \mathcal{D}_U$ and define $$ \mathcal{B'}=\bigcup_{O\in \mathcal{F}} \mathcal{B}_O.$$
	
To see 	$\mathcal{B'}$ is a $\pi$-base , take any $W\in \tau^*(X)$. Because  $\overline{\cup \mathcal{U}}=X$, there is a $U\in  \mathcal{U}$  such that $U\cap W\neq \emptyset.$ Because $U\subseteq \overline{\cup \mathcal{D}_U}$, there is an $O\in \mathcal{D}_U\subseteq \mathcal{F}$ such that $O\cap W\neq \emptyset.$ Because $\mathcal{B}_O$ is a $\pi$-base of $O$, there is a $B\in \mathcal{B}_O\subseteq \mathcal{B'}$ such that $B\subseteq O\cap W$. Thus,  there is a $B\in\mathcal{B'}$ such that $B\subseteq W$.

To see $\mathcal{B'}$ is a Galvin $\pi$-base for $X$, take $B^* \in \mathcal{B'}$. Thus, there exist   $U^*\in  \mathcal{U}$ and $O^*\in \mathcal{D}_U$ such that $B^*\subseteq O^* \subseteq U^*$. Because   $\mathcal{U}$ and $\mathcal{D}_U$ disjoint, $O^*$ and $U^*$ are unique. Because $B^*\in \mathcal{B}_{O^*}$, there exists a disjoint family \(\mathcal{A}_{O^*} \subseteq \tau^*({O^*}) \)   satisfying $$ |\{ K \in \mathcal{B}_{O^*} : B^* \subseteq K \}| \leq |\mathcal{A}_{O^*}|.$$
 $S(B^*)=S(O^*)$  because  $U^*\in \mathcal{U}$ and $B^*, O^*\in \tau^*(U^*)$. Thus, there exists a disjoint $\mathcal{A}_{B^*}\subseteq \tau^*(B^*)$ such that $|\mathcal{A}_{B^*}|=|\mathcal{A}_{O^*} |$.  Therefore,  $$|\{ K \in \mathcal{B}_{O^*} : B^* \subseteq K \}| \leq |\mathcal{A}_{B^*} |.$$

	Take any $M\in \{ K \in \mathcal{B'} : B^* \subseteq K \}$. Thus, $B^* \subseteq M$ and there exist  $O\in \mathcal{F}$ and $ U\in  \mathcal{U}$ such that $M\in \mathcal{B}_O$ and $O\subseteq U$. Therefore $B^*\subseteq M\subseteq  O\subseteq U$, $ U\in  \mathcal{U}$ and $O\in \mathcal{F}$.  Because of the uniqueness of $U^*$ and $O^*$, we have $U^*=U$ and $O^*=O$. Therefore $M\in \mathcal{B}_{O^*}$. Thus, $M\in \{ K \in \mathcal{B}_{O^*}  : B^* \subseteq K \}$. So, $\{ K \in \mathcal{B}_{O^*}  : B^* \subseteq K \}=\{ K \in \mathcal{B'}  : B^* \subseteq K \}$. Therefore, $$|\{ K \in \mathcal{B'} : B^* \subseteq K \}| \leq |\mathcal{A}_{B^*} |.$$
	Hence, for any $B^* \in \mathcal{B'}$, there exists a disjoint $\mathcal{A}_{B^*}\subseteq \tau^*(B^*)$  such that $|\{ K \in \mathcal{B'} : B^* \subseteq K \}| \leq |\mathcal{A}_{B^*} |$. Thus,    $\mathcal{B'}$ is a Galvin $\pi$-base for $X$.

$(iii\Rightarrow ii)$ is obvious, because $\tau^*(X)$ is a $\pi$-base.

$(ii\Rightarrow iii)$.  Take any $U\in \tau^*(X)$. Let $\mathcal{B}$ be the $\pi$-base for $X$ such that  each $O\in \mathcal{B}$  has the "property" in $(ii)$. Let $\mathcal{B}(U)=\{B\in \mathcal{B}: B\subseteq U\}$. Thus, $\mathcal{B}(U)$ is a $\pi$-base for $U$ such that each element of   $\mathcal{B}(U)$ has the  "property" in $(ii)$. Therefore, the subspace $U$ itself satisfies $(ii)$. Because we showed that $(ii\Leftrightarrow i)$, $U$ has a Galvin $\pi$-base $\mathcal{B}_U$. Thus, for any $W\in \tau^*(U)$, there exists a disjoint family $\mathcal{A}\subseteq \tau^*(W)\subseteq \tau^*(U)$ such that $|\{B\in \mathcal{B}_U:W\subseteq B\}|\le |\mathcal{A}|$. Therefore, $U$  has the "property" in (ii).

\end{proof}

It was shown in Corollary \ref{vin} that a space with a Galvin base, i.e., a base with the (*)-property, has a base satisfying the ($\star$) and ($\star\star$)-conditions of Theorem \ref{Teorem winning ten 2-tactige ana sonuc 2}. Now, let's consider the converse: does a space with a base satisfying the ($\star$) or ($\star\star$)-conditions of Theorem \ref{Teorem winning ten 2-tactige ana sonuc 2} necessarily have a base with the (*)-property? In the following example, although it may not be entirely fair, the answer to this question is negative. To make the question more meaningful, we need to modify the (*)-property.
 
 The following example is given in \cite{brian2021telgarsky} to show that there exists a  $T_1$-space that does not have the (*)-property. Even though its proof is straightforward, we provide it here for the convenience of the reader.
 
\begin{example}\label{ds}
 Let $X$ be a set and $|X|=\omega_1$.  Let us consider the finite complement topology on $X$. Then,
 
 (i) $X$ is a $T_1$ space.
 
 (ii) \2 has a winning strategy (even 1-tactic) in $BM(X)$. Actually, any countable intersection of open sets 
 is not empty set. 
 
 (iii) For any $\pi$-base  $\mathcal{B}$ for the space  $X$,  $\mathcal{B}$  satisfies ($\star$), ($\star\star$), ($\dagger$) and  ($\dagger\dagger$)-conditions but $\mathcal{B}$ does not satisfy (*)-property.
 
 (iv) $\pi w(X)= \omega_1$.
\end{example}

\begin{proof}
(i) $X$ is a $T_1$ space,  because it has finite complement topology. 

(ii) Let $(U_0,V_0, U_1,V_1\ldots)$ be an any play in $BM(X).$ Since each of the sets $U_i$ and $V_i$ excludes only finitely many elements of $X$, it follows that $|X-{\bigcap_{i\in \omega}U_i}|\leq \omega$. But $|X|=\omega_1$. Thus, $\bigcap_{i\in \omega}U_i\neq \emptyset$.

(iii) Let $\mathcal{B}$  be a $\pi$-base for the space  $X$.

  $pur(\mathcal{B})=\emptyset$ because there is no disjoint open subsets of $X$. Because 	 $pur(\mathcal{B})=\emptyset$, then $\mathcal{B}$ satisfies ($\star$), ($\star\star$), ($\dagger$) and  ($\dagger\dagger$)-conditions.  
  
  Now, take $W_0\in \mathcal{B}$ and take any $x_0\in W_0$. Because  $\mathcal{B}$ is  a $\pi$-base and $W_0-\{x_0\}$ open subset of $X$, there exists a   $W_1 \in \mathcal{B}$ such that $W_1\subseteq W_0-\{x_0\}$. Thus
    $W_0, W_1\in \{B\in \mathcal{B}: W_1\subseteq B\}$, and so, $| \{B\in \mathcal{B}: W_1  \subseteq B\}|\geq 2$. For any  disjoint $\mathcal{A}\subseteq \tau^*(W_1)$, $|\mathcal{A}|\leq 1$ because there is no disjoint open subsets of $X$. Therefore, there exists a   $W_1 \in \mathcal{B}$ such that $| \{B\in \mathcal{B}: W_1  \subseteq B\}|>|\mathcal{A}|$ for any  disjoint $\mathcal{A}\subseteq \tau^*(W_1)$. Thus, $\mathcal{B}$ does not have the (*)-property.

 (iv) For any $\pi$-base $\mathcal{B}$ for $X$. Assume $|\mathcal{B}|\le \omega$. Then, $|\bigcup_{B\in \mathcal{B}}(X\setminus B)|\le \omega$ because $|\mathcal{B}|\le \omega$ and $(X\setminus B)$ is finite. Thus, we can take a $x\in X-\bigcup_{B\in \mathcal{B}}(X\setminus B)$. Then, there is no $B\in \mathcal{B}$ satisfying $B\subseteq X-\{x\}$ and this is a contradiction. Thus,     $|\mathcal{B}|> \omega$. Because $\mathcal{B}\subseteq \{U\subseteq X: X-U \text{ is finite}\}$ and $|\{U\subseteq X: X-U \text{ is finite}\}|=|X|=\omega_1$. Therefore, $|\mathcal{B}|= \omega_1$. Hence, $\pi w(X)= \omega_1$.
\end{proof}

With its current formulation, the (*)-property makes the problem of finding a space with a basis that satisfies both the ($\star$) and ($\star \star$)conditions but does not satisfy the 
(*)-property easily solvable, as demonstrated in the example above. Furthermore, the example shows that even  $T_1$ spaces that do not meet the (*)-property can be found. Therefore, since it becomes unsuitable to use the 
 (*)-property in the results below, we modify the property as follows.

($\bullet$) $pur(\mathcal{B})=\emptyset$ or for any $W\in pur(\mathcal{B})$ there exists a disjoint family 
$\mathcal{A}_W\subseteq \tau^*(W)$  such that $|\mathcal{A}_W|\geq  |\{B \in\mathcal{B}: W \subseteq B \}|$.

The following easy example answers some questions of the first arXiv version of this paper and reveals some differences between   ($\star$), ($\star\star$), ($\dagger$), ($\dagger \dagger$) and ($\bullet$). 
 
 \begin{example}\label{bitsin}($\mathsf{CH}$)
 Let $\mathbb{R}$ be the real numbers with its usual topology. Let $\mathcal{B}$ consist of open intervals of length  $\frac{1}{n}$ for   $n\in \omega-\{0\}$ i.e., $\mathcal{B}=\{(a,a+\frac{1}{n}):a\in \mathbb{R}, n\in \omega-\{0\} \}$. Note that $\mathcal{B}$ is a Noetherian $\pi$-base of $\mathbb{R}$ and $\mathcal{B}=pur(\mathcal{B})$. Then, $\mathcal{B}$ has the ($\star$), ($\star\star$), ($\dagger$) and ($\dagger \dagger$) properties but $\mathcal{B}$ does not have the  $(\bullet)$ property.

 \end{example}  
 
 \begin{proof}
To see  $\mathcal{B}$ does not have the  $(\bullet)$ property, take any $W\in \mathcal{B}$	 which has length less than 1. Note that $|\{B\in \mathcal{B}:W\subseteq B\}|=\omega_1$ and $|\mathcal{A}|\leq \omega_0$ for any disjoint family $\mathcal{A}\subseteq \tau^*(W)$.

Now, we will see that $\mathcal{B}$ has the other properties. Note that  $rank(\mathcal{B})=\omega$. Let $\bigcup_{i<\omega}\mathcal{B}_i$ be the Noetherian union of $\mathcal{B}$. Note that $\mathcal{B}_i$ consists of open intervals of length $\frac{1}{i+1}$ and $|\mathcal{B}_i|=\omega_1$ for all $i\in \omega$. Thus, we can define a Noetherian table
 $[\mathcal{B}]$ such that the  $i$-th row is a tuple $(B^0_i,B^1_i,B^2_i,\ldots)$ consisting of all elements of   $\mathcal{B}_i$ and $j<\omega_1$ for any $B^j_i$ of $(B^0_i,B^1_i,B^2_i,\ldots)$. This is possible because $|\mathcal{B}_i|=\omega_1$. Now take any $B^j_i\in [\mathcal{B}]$. Thus, $r(B^j_i)=i<\omega_0$, $o(B^j_i)=j<\omega_1$ and there exists a disjoint family $\mathcal{A}\in \tau^*(B^j_i)$ such that $|\mathcal{A}|=\omega_0$. Therefore $|\mathcal{A}|\geq \max\{r(B^j_i), |o(B^j_i)|\}$. Thus, $\mathcal{B}$ has the property $(\dagger\dagger)$. Because $\mathcal{B}=pur(\mathcal{B})$, we can define $[\mathcal{B}]=[pur(\mathcal{B})]$. Thus, we can apply the same process to $[pur(\mathcal{B})]$. Therefore,
  $\mathcal{B}$ has the property $(\dagger)$.  From Remark \ref{R:1}, $\mathcal{B}$ has the properties ($\star$) and  ($\star\star$).

 \end{proof}

  \begin{corollary}\label{C: 3. 1  sonucu}
 	
Let $X$ be a  space $\kappa$ an infinite cardinal and  $\pi w(X)\leq\kappa^+$. If $pur(\tau^*(X))=\emptyset$ or for any $U\in pur(\tau^*(X))$  there exists a disjoint family $\mathcal{A}_U\subseteq \tau^*(U)$ such that $|\mathcal{A}_U|\geq \kappa$, then we have the following:

(1) $X$ has  a $\pi$ base $\mathcal{B}$ such that  $\mathcal{B}$ satisfies the  five conditions:   ($\star$), ($\star\star$), ($\dagger$), ($\dagger \dagger$) and ($\bullet$).

(2) If \2 has a winning strategy in $BM(X)$, then \2 has a  2-tactic in $BM(X)$. 
\end{corollary}

\begin{proof} Let $\pi w(X)=\chi$ and $\mathcal{T}$ be a $\pi$-base for $X$ where $|\mathcal{T}|=\chi\le \kappa$.  From Theorem \ref{Theorem: pi Base varsa Noetherian pi Base var},  there exist a Noetherian $\pi$-base $\mathcal{B}\subseteq \mathcal{T}$ such that $\mathcal{B}$ satisfies $(i)$ and $(ii)$ of Theorem \ref{Theorem: pi Base varsa Noetherian pi Base var}.

If $pur(\tau^*(X))=\emptyset$, then from Corollary \ref{C:3},   $pur(\mathcal{B})=\emptyset$. Thus,  we get (1), and so, (2). If $pur(\tau^*(X))\neq\emptyset$, then from Corollary \ref{C:3}, $pur(\mathcal{B})\neq\emptyset$. Thus, we only consider the case $pur(\mathcal{B})\neq\emptyset$.

Firstly,  we will see that $\mathcal{B}$  satisfies $(\bullet)$-property. Now, take  any $W\in pur(\mathcal{B})$.  From the hypothesis, there exists a disjoint family $\mathcal{A}_W\subseteq \tau^*(W)$ such that $|\mathcal{A}_W|\geq \kappa$. From $(ii)$ of Theorem \ref{Theorem: pi Base varsa Noetherian pi Base var},
$|\{B\in \mathcal{B}: W\subseteq B \}|<\chi\leq\kappa^+$ for all $W\in \mathcal{B}$. Therefore, $|\{B\in \mathcal{B}: W\subseteq B \}|\leq |\mathcal{A}_W|$. Thus, $\mathcal{B}$  satisfies $(\bullet)$-property.

Now, we will see that $\mathcal{B}$  satisfies the conditions ($\star$), ($\star\star$), ($\dagger$)  and ($\dagger \dagger$).	 Because $\chi\leq\kappa^+$, from $(i)$ of Theorem \ref{Theorem: pi Base varsa Noetherian pi Base var}, we have $|\mathcal{B}|\leq \kappa^+$ and $\mathcal{B}$ has an $\alpha\times \beta$  Noetherian table $[\mathcal{B}]$ where  $\alpha, \beta\leq \kappa^+$. (Note that from Theorem \ref{yeter}, $pur(\mathcal{B})$ has also a  $\theta\times \delta$ Noetherian table $[pur(\mathcal{B})]$  satisfying  $\theta,\delta\leq \kappa^+$, because  $\emptyset\neq pur(\mathcal{B})\subseteq \mathcal{B}$). 
 
 Now, take and fix any $U\in pur(\mathcal{B})$. 
 
 (To see $\mathcal{B}$ has ($\star\star$) or ($\dagger \dagger$) conditions, say $U=B^{o(U)}_{r(U)}\in [\mathcal{B}]$. To see $\mathcal{B}$ has ($\star$) or ($\dagger$)-conditions, say $U=B^{o'(U)}_{r'(U)}\in [pur(\mathcal{B})]$ and replace  the following each $\alpha$ and $\beta$ with $\theta$ and $\lambda$, respectively. Without loss of generality we will use $r(U),o(U),\alpha$ and $\beta$. So, the proof will be valid for these  four conditions.) 
 
 From the hypothesis, we have a disjoint family $\mathcal{A}_U\subseteq \tau^*(U)$ such that $|\mathcal{A}_U|\geq \kappa$. To get ($\star$), ($\star\star$), ($\dagger$), ($\dagger \dagger$) and (2), from Corollary \ref{Teorem winning ten 2-tactige ana sonuc} and Remark \ref{R:1},  it is enough to see that $|\mathcal{A}_U|\geq  \max\{|r(U)|,|o(U)|\}$.
Because $\max\{\alpha,\beta\}\leq \kappa^+, r(U)<\alpha$ and $o(U)<\beta$,     $\max\{|r(U)|,|o(U)|\}< \max\{\alpha,\beta\}\leq\kappa^+$.  Then,  $\max\{|r(U)|,|o(U)|\} \leq \kappa$ (because $|r(U)|$ and $|o(U)|$ are cardinals). Therefore, $\max\{|r(U)|,|o(U)|\} \leq \kappa\leq |\mathcal{A}_U|$.

\end{proof}

Note that in the theorem above, same $\pi$-base  has the five conditions. The same holds for the two results that follow. At the end of this article, we will pose some questions regarding the interrelations among the five conditions.

\begin{corollary}\label{C: 3. 2  sonucu}
	
	Let $X$ be a space $\kappa$ an infinite cardinal and  $\pi w(X)\leq\kappa$. If $pur(\tau^*(X))=\emptyset$ or  $c(U)\geq \kappa$ for any $U\in pur(\tau^*(X))$, then we have the following:

	(1) $X$ has a $\pi$ base $\mathcal{B}$ such that  $\mathcal{B}$ satisfies the  five conditions:   ($\star$), ($\star\star$), ($\dagger$), ($\dagger \dagger$) and ($\bullet$).

	(2) If \2 has a winning strategy in $BM(X)$, then \2 has a  2-tactic in $BM(X)$. 
\end{corollary}

\begin{proof} (Similar arguments in the proof of Corollary \ref{C: 3. 1  sonucu} are valid for this proof. For  shortness, we did not repeat them.)

Let $\pi w(X)=\chi$ and $\mathcal{T}$ be a $\pi$-base for $X$ where $|\mathcal{T}|=\chi$.  From Theorem \ref{Theorem: pi Base varsa Noetherian pi Base var},  there exist a Noetherian $\pi$-base $\mathcal{B}\subseteq \mathcal{T}$ such that $\mathcal{B}$ satisfies $(i)$ and $(ii)$ of Theorem \ref{Theorem: pi Base varsa Noetherian pi Base var}.
 
 We only consider the case $pur(\mathcal{B})\neq\emptyset$.

Firstly,  we will see that $\mathcal{B}$  satisfies $(\bullet)$-property. Now, take  any $W\in pur(\mathcal{B})$.  From the hypothesis, $c(W)\geq \kappa.$ From $(ii)$ of Theorem \ref{Theorem: pi Base varsa Noetherian pi Base var},
$|\{B\in \mathcal{B}: W\subseteq B \}|<\chi\leq\kappa$. Therefore, $|\{B\in \mathcal{B}: W\subseteq B \}|< \chi\le c(W)$.  Thus, $|\{B\in \mathcal{B}: W\subseteq B \}|<  c(W)$. Because of the definition of $c(W)$, there exists a disjoint family $\mathcal{A}_W\subseteq \tau^*(W)$ such that $|\mathcal{A}_W|\geq |\{B\in \mathcal{B}: W\subseteq B \}|$. Thus, $\mathcal{B}$  satisfies $(\bullet)$-property.

Now, we will see that $\mathcal{B}$  satisfies the conditions ($\star$), ($\star\star$), ($\dagger$)  and ($\dagger \dagger$).	 Because $\chi\leq\kappa$, from $(i)$ of Theorem \ref{Theorem: pi Base varsa Noetherian pi Base var}, we have $|\mathcal{B}|\leq \kappa$ and $\mathcal{B}$ has an $\alpha\times \beta$  Noetherian table $[\mathcal{B}]$ where  $\alpha, \beta\leq \kappa$.
Take any $U\in pur(\mathcal{B})$. Because $r(U)<\alpha\leq \kappa$ and $o(U)<\beta\leq \kappa$, $r(U),o(U)<\kappa$. Since $c(U)\geq \kappa$, there exists a disjoint family $\mathcal{A}_U\subseteq \tau^*(U)$ such that $|\mathcal{A}_U|\geq \max\{r(U),o(U)\}$. Thus, because of  the similar arguments in the proof of Corollary \ref{C: 3. 1  sonucu}, $\mathcal{B}$ has the  four conditions:  ($\star\star$), ($\dagger \dagger$), ($\star$) and ($\dagger$). So, we have (2) and the proof is completed.
\end{proof}

\begin{theorem}\label{C:2}

	For any space $X$ if $\pi w(X)\leq \omega_1$, then we have the following.

	(1)  $X$ has a $\pi$ base   satisfying the  five conditions:  ($\star$), ($\star\star$), ($\dagger$), ($\dagger \dagger$) and ($\bullet$).

	(2) If \2 has a winning strategy in $BM(X)$, then \2 has a  2-tactic in $BM(X)$.
\end{theorem}

\begin{proof} This follows immediately from Corollary \ref{C: 3. 1  sonucu} and definition of purification ($pur(\mathcal{A})$).

\end{proof}

Note that there is no separation axiom  on the spaces in the Theorem  above.

\begin{lemma}\label{Lemma: GCH ile w(X)leq c(X)+}($\mathsf{GCH}$) 
	Let $X$ be a $T_3$ space.  If $d(X)=c(X)$, then $w(X)\leq c(X)^+$ (so, $\pi w(X)\leq c(X)^+$).
\end{lemma}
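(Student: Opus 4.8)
The plan is to reduce the statement to the regular-space bound $w(X)\le 2^{d(X)}$ and then to invoke $\mathsf{GCH}$ together with the hypothesis $d(X)=c(X)$. Write $\kappa=d(X)=c(X)$, and note $\kappa\ge\omega$ since $c(X)\ge\omega$ by definition. The role of the hypothesis $d(X)=c(X)$ is purely bookkeeping but essential: one always has $c(X)\le d(X)$, so passing from a bound phrased with $d(X)^+$ to the desired bound $c(X)^+$ requires exactly the reverse inequality $d(X)\le c(X)$, i.e.\ $d(X)=c(X)$.

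First I would prove that $w(X)\le 2^{d(X)}$ for every $T_3$ space. Fix a dense set $D\subseteq X$ with $|D|=\kappa$. The key observation is that for every open $U$ the trace $U\cap D$ determines $\overline{U}$: since $D$ is dense and $U$ is open, $U\cap D$ is dense in $U$, so $U\subseteq\overline{U\cap D}$ and hence $\overline{U\cap D}=\overline{U}$. Consider the family $\mathcal{R}$ of regular open subsets of $X$, i.e.\ those $U\in\tau^*(X)$ with $U=\mathrm{int}(\overline{U})$, where $\mathrm{int}$ denotes interior. The assignment $U\mapsto U\cap D$ is injective on $\mathcal{R}$: if $U,U'\in\mathcal{R}$ and $U\cap D=U'\cap D$, then $\overline{U}=\overline{U\cap D}=\overline{U'\cap D}=\overline{U'}$, whence $U=\mathrm{int}(\overline{U})=\mathrm{int}(\overline{U'})=U'$. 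Therefore $|\mathcal{R}|\le|\mathscr{P}(D)|=2^{\kappa}$.

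Next I would check that $\mathcal{R}$ is a base for $X$, which is precisely where the $T_3$ hypothesis enters. Given $x\in X$ and open $U\ni x$, regularity yields an open $V$ with $x\in V\subseteq\overline{V}\subseteq U$; then $\mathrm{int}(\overline{V})\in\mathcal{R}$ and $x\in V\subseteq\mathrm{int}(\overline{V})\subseteq\overline{V}\subseteq U$. Hence $\mathcal{R}$ is a base, and so $w(X)\le|\mathcal{R}|\le 2^{\kappa}=2^{d(X)}$. Finally, assuming $\mathsf{GCH}$ and using $\kappa\ge\omega$, we have $2^{\kappa}=\kappa^{+}$, so $w(X)\le\kappa^{+}=d(X)^{+}=c(X)^{+}$, the last equality by $d(X)=c(X)$; since $\pi w(X)\le w(X)$ always holds, we also obtain $\pi w(X)\le c(X)^{+}$.

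The one genuinely delicate point is the bound $w(X)\le 2^{d(X)}$ for \emph{merely} regular (not necessarily completely regular) spaces: the familiar route of embedding a Tychonoff space into a Tychonoff cube and reading off the weight from the density of the cube is unavailable here, so the combinatorial argument above is needed, coding the regular open sets by their traces on a fixed dense set and using regularity to guarantee that these sets already form a base. Everything else is routine cardinal arithmetic.
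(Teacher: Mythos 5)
Your proof is correct and follows the same route as the paper: both reduce the statement to the inequality $w(X)\le 2^{d(X)}$ for $T_3$ spaces and then finish with $\mathsf{GCH}$ together with $d(X)=c(X)$ and $\pi w(X)\le w(X)$. The only difference is that the paper simply cites this inequality (Lemma 2.7(b) in Juh{\'a}sz), whereas you reprove it in full via the standard coding of regular open sets by their traces on a fixed dense set, which is a correct (and self-contained) rendering of the cited result.
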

\begin{proof}
	From Lemma 2.7.(b) in \cite{juhasz1980cardinal} , $w(X)\leq 2^{d(X)}$. Then  $w(X)\leq 2^{d(X)}=2^{c(X)}=c(X)^+$ with GCH.
\end{proof}

\begin{theorem}\label{Theorem: Seperable uzay Noetherian pi base omega1 den  semali var}($\mathsf{CH}$) Let $X$ be a separable $T_3$ space.

	(1)  $X$ has a $\pi$ base   satisfying the conditions of Corollary \ref{Teorem winning ten 2-tactige ana sonuc} and  conditions of Theorem \ref{Teorem winning ten 2-tactige ana sonuc 2}.

	(2) If \2 has a winning strategy in $BM(X)$, then \2 has a  2-tactic in $BM(X)$.
	
\end{theorem}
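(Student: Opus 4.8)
The plan is to reduce everything to Theorem \ref{C:2} by establishing the single cardinal inequality $\pi w(X)\leq\omega_1$ under $\mathsf{CH}$; once this is in hand, both (1) and (2) follow immediately from that theorem, since its conclusion is exactly the pair of assertions we want.

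First I would record that separability gives $d(X)\leq\omega$. Because $X$ is $T_3$, the inequality $w(X)\leq 2^{d(X)}$ (Lemma 2.7(b) in \cite{juhasz1980cardinal}, the very fact already used in the proof of Lemma \ref{Lemma: GCH ile w(X)leq c(X)+}) applies, yielding $w(X)\leq 2^{d(X)}\leq 2^{\omega}$. Invoking $\mathsf{CH}$ we have $2^{\omega}=\omega_1$, so $w(X)\leq\omega_1$. Since $\pi w(X)\leq w(X)$ always holds, this gives $\pi w(X)\leq\omega_1$.

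With $\pi w(X)\leq\omega_1$ in place, part (1) is precisely Theorem \ref{C:2}(1): $X$ has a $\pi$-base satisfying ($\star$), ($\star\star$), ($\dagger$) and ($\dagger\dagger$), which are the conditions of Theorem \ref{Teorem winning ten 2-tactige ana sonuc 2} and of Corollary \ref{Teorem winning ten 2-tactige ana sonuc}; and part (2) is Theorem \ref{C:2}(2), upgrading a winning strategy of \2 in $BM(X)$ to a 2-tactic. Thus no independent game-theoretic argument is needed here.

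I do not expect a genuine obstacle: all the substantive content—the construction of the special Noetherian $\pi$-base and the conversion of a winning strategy into a 2-tactic—is already packaged in Theorem \ref{C:2} and its antecedents (ultimately Theorem \ref{Teorem winning ten 2-tactige ana sonuc 2} and Corollary \ref{C: 3. 1  sonucu}). The only point requiring a little care is the appeal to $w(X)\leq 2^{d(X)}$, which uses the $T_3$ (indeed Hausdorff) hypothesis, with separability entering solely to force $d(X)\leq\omega$ so that $\mathsf{CH}$ collapses $2^{d(X)}$ to $\omega_1$. As an alternative route one could apply Lemma \ref{Lemma: GCH ile w(X)leq c(X)+} after noting $d(X)=c(X)=\omega$ for a separable space, observing that the only instance of $\mathsf{GCH}$ actually invoked in its proof is $2^{\omega}=\omega_1$, which is exactly $\mathsf{CH}$.
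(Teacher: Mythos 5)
Your proposal is correct and takes essentially the same route as the paper: both reduce to Theorem \ref{C:2} by establishing $\pi w(X)\leq\omega_1$ from separability, regularity and $\mathsf{CH}$ (the paper via $d(X)=c(X)=\omega$ and Lemma \ref{Lemma: GCH ile w(X)leq c(X)+}, you by unwrapping that lemma to the Juh\'asz inequality $w(X)\leq 2^{d(X)}$ plus $2^{\omega}=\omega_1$, which is if anything slightly cleaner since it makes explicit that only the $\mathsf{CH}$ instance of $\mathsf{GCH}$ is used). One caution: your parenthetical claim that the Hausdorff axiom alone suffices for $w(X)\leq 2^{d(X)}$ is false --- regularity is essential (e.g.\ the Kat\v{e}tov extension of $\mathbb{N}$ is a separable Hausdorff space of weight $2^{2^{\omega}}$) --- but this does not affect your argument, since $X$ is assumed $T_3$.
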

\begin{proof}
 Because $X$ is separable space,  $d(X)=c(X)=\omega$. From CH and Lemma \ref{Lemma: GCH ile w(X)leq c(X)+}, $w(X)\leq c(X)^+= \omega_1$. So, $\pi w(X)\leq  \omega_1$. Then, from Corollary \ref{C:2}, we get (1) and  (2).
\end{proof}

The following Theorem \ref{Teorem Erdos-Tarski} and Remark \ref{C:1} are known.

\begin{theorem}\label{Teorem Erdos-Tarski} (12.2. Theorem in \cite{kunen2014handbook} or 4.1. in \cite{juhasz1980cardinal}) 
	Let $X$ be a space and $c(X)=\kappa$. If $\kappa$ is a singular cardinal, 
	then $X$ has a cellular family of cardinality $\kappa$.\begin{flushright}
		\kare
	\end{flushright} 
\end{theorem}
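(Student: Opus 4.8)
The plan is to prove the attainment version of the Erd\H{o}s--Tarski theorem: since $\kappa=c(X)$ is singular, in particular $\kappa>\omega$, so the ``$+\,\omega$'' in the definition of $c(X)$ is irrelevant and $\kappa$ is simply the supremum of the sizes of cellular families in $X$. If some cellular family already has size $\geq\kappa$ we are done by passing to a subfamily, so I would assume the supremum is \emph{not} attained; then for every $\lambda<\kappa$ there is a cellular family of size exactly $\lambda$. Fix $\mu=\mathrm{cf}(\kappa)<\kappa$ together with a strictly increasing sequence $\langle\kappa_\xi:\xi<\mu\rangle$ of cardinals cofinal in $\kappa$, which exists precisely because $\kappa$ is singular. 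The goal becomes to manufacture a single cellular family of size $\kappa$ out of $\mu$-many smaller ones.

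The engine I would use is a maximal cellular family $\mathcal{M}=\{M_\eta:\eta<\delta\}$ of $X$: its union is dense, so any cellular family $\mathcal{C}\subseteq\tau^*(X)$ of size $\rho$ distributes across the pieces. Each $C\in\mathcal{C}$ meets some $M_{\eta(C)}$, and for fixed $\eta$ the sets $\{C\cap M_\eta:\eta(C)=\eta\}$ form a cellular family inside the subspace $M_\eta$, hence number at most $c(M_\eta)$. This yields $\rho\leq\sum_{\eta<\delta}c(M_\eta)\leq|\delta|\cdot\sup_{\eta}c(M_\eta)$, and letting $\rho$ run up to $\kappa$ gives $\kappa\leq|\delta|\cdot\sup_{\eta}c(M_\eta)$. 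Since $|\delta|=|\mathcal{M}|\leq c(X)=\kappa$ and $\sup_\eta c(M_\eta)\leq\kappa$ as well, the product can reach $\kappa$ only if $|\delta|=\kappa$ or $\sup_\eta c(M_\eta)=\kappa$. In the first case $\mathcal{M}$ itself is a cellular family of size $\kappa$ and we are finished.

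In the second case the strategy is disjoint assembly. If I can find $\mu$ pairwise-disjoint open sets $\{U_\xi:\xi<\mu\}$ (drawn from the $M_\eta$, or from refinements of them) with $c(U_\xi)>\kappa_\xi$, then inside each $U_\xi$ I may pick a cellular family of size $\kappa_\xi$, this size being attained because $\kappa_\xi$ lies strictly below the supremum defining $c(U_\xi)$. The union of these families is cellular: within each $U_\xi$ it is disjoint by construction, and across distinct $U_\xi$ disjointness is automatic since the $U_\xi$ are disjoint. Its cardinality is $\sum_{\xi<\mu}\kappa_\xi=\kappa$, as required. This is exactly where singularity is indispensable: $\mu<\kappa$ pieces suffice, and cross-incompatibility comes for free from geometric disjointness rather than from any delicate combinatorial selection.

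The hard part is producing those $\mu$ disjoint pieces with cofinal cellularities. When $\sup_\eta c(M_\eta)=\kappa$ and no single $M_\eta$ has $c(M_\eta)=\kappa$, the values $c(M_\eta)$ are cofinal in $\kappa$, so by regularity of $\mu=\mathrm{cf}(\kappa)$ they are realized on at least $\mu$ distinct indices and the extraction is immediate. The genuine obstacle is the residual case in which the full cellularity is concentrated on fewer than $\mu$ ``large'' open subspaces $M_\eta$ with $c(M_\eta)=\kappa$ and supremum again unattained; here naive extraction fails and one is tempted to recurse into such a piece, risking an infinite descending chain of open sets. I would handle this by organizing the large opens (those of cellularity $\kappa$) into a tree under reverse inclusion and arguing, using a maximal cellular family of large opens at each node together with the regularity of $\mu$, that either some antichain of large opens has size $\geq\mu$ (whence disjoint assembly applies) or the tree is too thin to support cellularity $\kappa$ at the root, a contradiction. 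This bookkeeping, rather than the counting inequality, is where I expect the real work to lie.
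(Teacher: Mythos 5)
The paper gives no proof of this statement at all: it is quoted verbatim from the cited sources (12.2 in \cite{kunen2014handbook}, 4.1 in \cite{juhasz1980cardinal}), so your attempt has to be measured against the standard Erd\H{o}s--Tarski argument. Your first two paragraphs reproduce the easy half of that argument correctly: the reduction to the unattained case, the counting inequality $c(X)\le|\mathcal{M}|\cdot\sup_{\eta}c(M_\eta)$ across a maximal cellular family, the observation that a singular $\kappa$ is a limit cardinal so one of the two factors must equal $\kappa$, and the ``disjoint assembly'' step (including the correct justification that a cellular family of size $\kappa_\xi$ exists inside any open set of cellularity $>\kappa_\xi$). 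The extraction when the values $c(M_\eta)$ are cofinal in $\kappa$ but all $<\kappa$ is also right.

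The genuine gap is the residual case, which you explicitly leave as a plan rather than a proof (``this bookkeeping \dots is where I expect the real work to lie''), and the plan as described does not close. Setting aside the conflation of antichains under reverse inclusion with cellular families (incomparable opens can still meet; this is harmless only if by antichain you mean pairwise incompatible, i.e.\ disjoint, opens), the tree idea fails for two concrete reasons: the tree cannot be continued through limit levels, since a decreasing branch of open sets can have empty-interior intersection; and with fewer than $\mu$ children per node, the residues $R_V=V-\overline{\bigcup\{W: W \mbox{ a child of } V\}}$ need not capture every member of a cellular family --- a set can meet nodes of every level while avoiding all residues --- so no contradiction of the form ``the tree is too thin to support cellularity $\kappa$ at the root'' materializes. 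The missing idea is the standard dichotomy, applied to \emph{small} opens rather than large ones: either (a) every nonempty open set contains a nonempty open subset of cellularity $<\kappa$, in which case you run your own counting argument with a maximal cellular family consisting \emph{entirely of small opens}; then large pieces never occur, so either that family has size $\kappa$ or the values $c(W)$ are cofinal in $\kappa$ and your extraction succeeds; or (b) there is a nonempty open $U_0$ all of whose nonempty open subsets have cellularity $\kappa$, and then \emph{any} cellular family of size $\mu$ inside $U_0$ --- which exists simply because $c(U_0)=\kappa>\mu$ --- is already a family of $\mu$ pairwise disjoint opens each of cellularity $\kappa>\kappa_\xi$, and your disjoint assembly finishes immediately. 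With this dichotomy the residual case you identified as hard evaporates; without it, the proof is incomplete.
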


It is known that any cardinal must be regular or singular, any regular cardinal must be limit or successor and $X$ has a cellular family of cardinality $c(X)$ if $c(X)$  is a successor cardinal. So, from 
Theorem \ref{Teorem Erdos-Tarski}, we
have the following corollary.

\begin{remark}\label{C:1}
	For any space $X$ if $c(X)=\kappa$ is not a regular limit cardinal, then $X$ has a cellular family of cardinality $\kappa$.
  
	\begin{flushright}
		\kare
	\end{flushright}  	
\end{remark}

By using Corollary \ref{C: 3. 1  sonucu} and supposing that $c(U)$ is not a regular limit cardinal where $U\in pur(\tau^*(X))$, we have the following corollary. If $c(U)$ is a regular limit cardinal, we can employ  Corollary \ref{C: 3. 2  sonucu}.

\begin{corollary}\label{Corollary altuzayda cellurity ayni ise 2 tactik var}

	Let $X$ be a space, $pur(\tau^*(X))\neq \emptyset$ and  $\pi w(X) \leq c(X)^+$. If $c(X)$ is not a regular limit cardinal and   $c(U)=c(X)$ for any $U\in pur(\tau^*(X))$, 
	then \2 has a  2-tactic in $BM(X)$ whenever \2 has a winning strategy in $BM(X)$. 
	
\end{corollary}

\begin{proof}
	Let $c(U)=c(X)=\kappa$. From Remark \ref{C:1}, there exists a disjoint family $\mathcal{A}_U\subseteq \tau^{*}(U)$ such that $|\mathcal{A}_U|= \kappa$. Thus, from Corollary \ref{C: 3. 1  sonucu}, \2 has a  2-tactic in $BM(X)$.
\end{proof}

 \begin{corollary}\label{Corollary altuzayda cellurity density ayni   ise 2 tactik var}($\mathsf{GCH}$)
 	Let $X$ be a $T_3$  space and $\kappa$   not a regular limit cardinal and $pur(\tau^*(X))\neq \emptyset$. Suppose   for any $U\in pur(\tau^*(X))$,   $d(X)=c(X)=c(U)=\kappa$. Then, \2 has a   2-tactic in $BM(X)$ whenever \2 has a winning strategy in $BM(X)$. 
 	
 \end{corollary}

 \begin{proof}
 	From Lemma \ref{Lemma: GCH ile w(X)leq c(X)+}, $\pi w(X)\leq c(X)^+$. Thus, from Corollary \ref{Corollary altuzayda cellurity ayni ise 2 tactik var}, \2 has a  2-tactic in $BM(X)$.

 \end{proof}

  The following lemma is known and easy to prove.  
  
  \begin{lemma} \label{Lemma kazanma varsa maksimal cellular family}
  	Let $X$ be a space and $\mathcal{A}\subseteq \tau^*(X)$ a maximal cellular family.
  	Then \2 has a winning strategy (k-tactic) in $BM(X)$ iff \2 has a winning strategy (k-tactic) in $BM(A)$ for every $A\in \mathcal{A}$.
  	
  \end{lemma}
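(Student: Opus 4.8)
The plan is to first extract from maximality the one topological fact the whole argument rests on: if $\mathcal{A}$ is a maximal cellular family then $\bigcup\mathcal{A}$ is dense in $X$. Indeed, were it not, $X\setminus\overline{\bigcup\mathcal{A}}$ would be a nonempty open set disjoint from every member of $\mathcal{A}$, so it could be adjoined to $\mathcal{A}$, contradicting maximality. Two consequences will be used repeatedly: every $U\in\tau^*(X)$ meets some $A\in\mathcal{A}$, and, since $\mathcal{A}$ is disjoint, every nonempty open set contained in a member of $\mathcal{A}$ is contained in a \emph{unique} such member.

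For the forward implication I would simply restrict. Each $A\in\mathcal{A}$ is open in $X$, so every (added) play part of $BM(A)$ is also an (added) play part of $BM(X)$, and \2's responses to subsets of $A$ are automatically subsets of $A$. Hence a winning strategy (resp.\ $k$-tactic) $\sigma$ for \2 in $BM(X)$ restricts to one for $BM(A)$: the induced play in $BM(A)$ is literally a $\sigma$-play in $BM(X)$, so $\bigcap_i U_i\neq\emptyset$, which is exactly the winning condition in $BM(A)$.

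The substance is the converse. Suppose \2 has winning strategies $\sigma_A$ (resp.\ $k$-tactics) in each $BM(A)$. Given \1's opening move $U_0$ in $BM(X)$, pick $A=A(U_0)\in\mathcal{A}$ meeting $U_0$ (using a fixed choice function, and taking the unique containing member when one exists), and view $U_0':=U_0\cap A$ as \1's opening move of a simulated play in $BM(A)$. From round $1$ on every move of \1 lands inside $A$, so the simulated $BM(A)$-play is $(U_0',V_0,U_1,V_1,\dots)$ with the same moves as in $BM(X)$ except that $U_0$ is replaced by $U_0'$; letting \2 answer by $\sigma_A$ produces a $\sigma_A$-play, whence $U_0'\cap\bigcap_{m\ge1}U_m\neq\emptyset$. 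Since $U_1\subseteq V_0\subseteq U_0'\subseteq U_0$, this set equals $\bigcap_{n}U_n$, so \2 wins $BM(X)$.

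The delicate point --- and the main obstacle --- is realizing this as an honest $k$-tactic, which may not remember the distinguished $A$ chosen at the opening. I would recover $A$ from data available to the tactic: at any round $n\ge1$ the latest move $U_n$ lies in a unique member of $\mathcal{A}$, which is precisely the $A$ fixed at the start. The remaining annoyance is that when $n<k$ \2's window still contains $U_0$, which must be presented to $\sigma_A$ as $U_0'=U_0\cap A$ rather than $U_0$. I would finesse this with the uniform rule: intersect \emph{every} entry of the current window with $A$ before feeding it to $\sigma_A$. Entries from round $\ge1$ already lie inside $A$ and are unchanged, while $U_0$ is sent to $U_0\cap A=U_0'$; thus no round-counting or special-casing is needed, the window lengths match those $\sigma_A$ expects, and the converse goes through for $k$-tactics exactly as for winning strategies (where one may instead simply store $A$ and pass \2's remembered earlier moves to $\sigma_A$).
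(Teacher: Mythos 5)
Your proof is correct and follows essentially the same route as the paper's: restriction for the forward direction, and for the converse a simulated play in $BM(A_{U_0})$ with $U_0$ replaced by $U_0\cap A_{U_0}$, recovering $A$ in the $k$-tactic case as the unique member of $\mathcal{A}$ containing the latest move. Your uniform rule of intersecting every window entry with $A$ is the same device as the paper's intersection of the first entry only, since all later entries already lie inside $A$.
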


 \begin{theorem}\label{ke2y}
 	Let $X$ be a space. There exists  a maximal cellular family $\mathcal{A}\subseteq \tau^*(X)$  such that for any $A\in \mathcal{A}$ and for any  $B\in \tau^{*}( A)$,  $c(B)=c(A)$. In addition, assuming  GCH and letting $X$ be a $T_3$ space, for this family $\mathcal{A}$ and for any $A\in \mathcal{A}$ if $c(A)$ is not a regular limit cardinal  and $d(A)=c(A)$, then \2 has a  2-tactic in $BM(X)$ whenever  \2 has a winning strategy in $BM(X)$. 
 \end{theorem}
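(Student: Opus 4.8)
The plan is to establish the two assertions in turn: first the purely topological existence of a maximal cellular family all of whose members are ``cellularly homogeneous'', and then the game-theoretic conclusion, which under $\mathsf{GCH}$ reduces — via Lemma \ref{Lemma kazanma varsa maksimal cellular family} — to applying the earlier corollaries piece by piece to each member of that family. No separation axiom or set-theoretic hypothesis is needed for the first assertion; these enter only in the ``In addition'' clause.

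For the first assertion the key observation is that cellularity is monotone along open subspaces: if $B\subseteq A$ are both in $\tau^*(X)$, then every cellular family of $B$ consists of sets open in $A$, so it is a cellular family of $A$, whence $c(B)\leq c(A)$. Now fix any $U\in\tau^*(X)$ and consider the set of cardinals $\{c(B):B\in\tau^*(U)\}$. Since the cardinals are well-ordered this set has a least element, attained by some $A\in\tau^*(U)$ of minimal cellularity. For this $A$ and any $B\in\tau^*(A)\subseteq\tau^*(U)$ we get $c(A)\leq c(B)$ by minimality and $c(B)\leq c(A)$ by monotonicity, so $c(B)=c(A)$. Hence the family $\mathcal{H}=\{A\in\tau^*(X): c(B)=c(A)\text{ for all }B\in\tau^*(A)\}$ meets every nonempty open set. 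I would then take, by Zorn's lemma, a maximal disjoint subfamily $\mathcal{A}\subseteq\mathcal{H}$. If $\mathcal{A}$ failed to be maximal cellular in $\tau^*(X)$, some $V\in\tau^*(X)$ would be disjoint from all of $\mathcal{A}$; but $V$ contains a member $A'\in\mathcal{H}$, and $A'\subseteq V$ is then disjoint from all of $\mathcal{A}$, so $\mathcal{A}\cup\{A'\}$ would be a larger disjoint subfamily of $\mathcal{H}$, a contradiction. Thus $\mathcal{A}$ is maximal cellular and, by construction, $c(B)=c(A)$ for every $A\in\mathcal{A}$ and every $B\in\tau^*(A)$.

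For the second assertion, assume $\mathsf{GCH}$, that $X$ is $T_3$, that \2 has a winning strategy in $BM(X)$, and that for each $A\in\mathcal{A}$ the cardinal $c(A)$ is not a regular limit cardinal and $d(A)=c(A)$. By the ``$\Rightarrow$'' direction of Lemma \ref{Lemma kazanma varsa maksimal cellular family}, \2 has a winning strategy in $BM(A)$ for every $A\in\mathcal{A}$; it therefore suffices to produce a $2$-tactic for \2 in each $BM(A)$. Fix $A\in\mathcal{A}$ and put $\kappa=c(A)$; as a subspace of a $T_3$ space, $A$ is $T_3$. If $pg(\tau^*(A))=\emptyset$, then applying Corollary \ref{C:3} (with $\mathcal{B}=\tau^*(A)$) and Corollary \ref{c:yordu bu makale} to $A$ gives \2 a $1$-tactic, hence a $2$-tactic, in $BM(A)$. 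If $pg(\tau^*(A))\neq\emptyset$, I would apply Corollary \ref{Corollary altuzayda cellurity density ayni ise 2 tactik var} to the space $A$: its hypotheses hold because $A$ is $T_3$, $\kappa$ is not a regular limit cardinal, and for every $U\in pg(\tau^*(A))\subseteq\tau^*(A)$ the homogeneity of $A$ gives $c(U)=c(A)=\kappa$, while $d(A)=c(A)=\kappa$ by assumption, so $d(A)=c(A)=c(U)=\kappa$ for all such $U$. That corollary then yields a $2$-tactic in $BM(A)$.

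Having a $2$-tactic in $BM(A)$ for every $A\in\mathcal{A}$, I would invoke the ``$\Leftarrow$'' direction of Lemma \ref{Lemma kazanma varsa maksimal cellular family} a final time to conclude that \2 has a $2$-tactic in $BM(X)$. The conceptual crux is the first assertion: once one sees that cellularity is cardinal-valued and nonincreasing along open subspaces, a least value must be attained and then propagates homogeneously downward, which is exactly what produces the members of $\mathcal{A}$. After that the game-theoretic half is a routine assembly of the earlier corollaries; the only point requiring care is to treat the case $pg(\tau^*(A))=\emptyset$ separately, since Corollary \ref{Corollary altuzayda cellurity density ayni ise 2 tactik var} presupposes $pg(\tau^*(A))\neq\emptyset$.
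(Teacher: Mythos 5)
Your proposal is correct and follows essentially the same route as the paper: you prove the same claim that sets of minimal cellularity are cellularly homogeneous (the paper's $W_U$), take a maximal cellular subfamily of the homogeneous sets (the paper's $\mathcal{A}^*$, your $\mathcal{H}$), and then combine Lemma \ref{Lemma kazanma varsa maksimal cellular family} with Corollary \ref{Corollary altuzayda cellurity density ayni   ise 2 tactik var} (and Corollary \ref{c:yordu bu makale} when $pg(\tau^*(A))=\emptyset$) exactly as the paper does. Your write-up is in fact slightly more explicit than the paper's in spelling out the monotonicity of cellularity and the case split, but the argument is the same.
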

 
 \begin{proof}

  For any $U\in \tau^*(X)$, let $min\{c(B): B\in \tau^*(U)\}=\alpha_U$ and choose  a $W_U\in \tau^*(U)$ such that $c(W_U)=\alpha_U$. 
 Now, define $\mathcal{A}^*=\{ W_U:U\in \tau^*(X)\}$. From Lemma \ref{key1}, there exists a maximal cellular subfamily $\mathcal{A}$ of  $\mathcal{A}^*$. 
 Therefore, for any $A\in \mathcal{A}$ and for any nonempty open subset $B\subseteq A$,  $ c(B)=c(A)$.

 	For the second part, fix any $A\in \mathcal{A}$. Suppose \2 has   a winning strategy in $BM(X)$. From Lemma \ref{Lemma kazanma varsa maksimal cellular family}, \2 has  a winning strategy in $BM(A)$. 
 	 So, from Corollary \ref{Corollary altuzayda cellurity density ayni   ise 2 tactik var} (from Corollary \ref{c:makale} if $pur(\tau^*(A))=\emptyset$),  \2 has  a  2-tactic  in $BM(A)$. Since $A$ is arbitrary, from Lemma \ref{Lemma kazanma varsa maksimal cellular family}, \2 has a   2-tactic in $BM(X)$. 
 \end{proof}

Monotonically normal spaces are well known. 
A $T_1$ space $X$ is said to be monotonically normal if there is a function $F(., .)$
which assigns to each point $x\in X$ and each open neighbourhood $U$ of $x$ an open subset $F(x, U)$
containing $x$ which satisfies the following conditions.

(1) $F(x, U) \subseteq  F(x, V)$ whenever  $U \subseteq  V$,

(2) $F(x, X\setminus\{y\}) \cap  F(y, X\setminus\{x\})) =\emptyset$ for every distinct points  $x, y\in X$.

Arbitrary subspaces of monotonically normal spaces are again monotonically
normal \cite{gartside1997cardinal}. 
Every linearly ordered space is monotonically normal (\cite{gartside1997cardinal}, 5.3 in \cite{heath1973monotonically}).

\begin{theorem}\label{tartık} Assuming there is no regular limit cardinal, if $X$ has a monotonically normal compactification and \2 has a winning strategy in $BM(X)$, then \2 has a  2-tactic in $BM(X)$. 
\end{theorem}
 \begin{proof} From the first part of Theorem \ref{ke2y},  there exists  a maximal cellular family $\mathcal{A}\subseteq \tau^*(X)$  such that for any $A\in \mathcal{A}$ and for any  $B\in \tau^{*}( A)$,  $c(B)=c(A)$.
 	
Because $X$ has a  monotonically normal compactification, $X$ is subspace of monotonically normal space. Then, $X$ is  monotonically normal space. Thus, for any $A\in  \mathcal{A}$, $A$ is monotonically normal and   has a  monotonically normal compactification (Note that the closure of ${A}$ in the compactification of $X$ is compact).  Thus, from	Corollary 19 in \cite{gartside1997cardinal},  $\pi w(A) =  d(A)$, and from Theorem A in \cite{gartside1997cardinal}, $d(A)\le c(A)^+$. Therefore,  $\pi w(A) \le c(A)^+$. 

 From Lemma \ref{Lemma kazanma varsa maksimal cellular family}, \2 has  a winning strategy in $BM(A)$. Because $\pi w(A) \le c(A)^+,$ from Corollary \ref{Corollary altuzayda cellurity ayni ise 2 tactik var} (if $pur(\tau^*(A))=\emptyset$, from Corollary \ref{c:makale}), \2 has  a 2-tactic in $BM(A)$. Because $A$ is an arbitrary element of $\mathcal{A}$,  from Lemma \ref{Lemma kazanma varsa maksimal cellular family}, \2 has  a 2-tactic in $BM(X)$.

 \end{proof}

Note that the assumption of the nonexistence of a regular limit cardinal in the theorem above and the following corollary is not entirely essential. It is enough that $c(A)$ is not a regular limit cardinal for all $A\in \mathcal{A}$.

\begin{corollary}
Assume that no regular limit cardinal exists, and let 
$X$ be either a linearly ordered space or, more generally, a generalized ordered space (GO-space). If \2 has a winning strategy in $BM(X)$, then \2 has a  2-tactic in $BM(X)$. 
\end{corollary}

\begin{proof}

If $X$ is a GO-space, then $X$ is a subspace of a linearly ordered  space  $Y$. 
Then, $Y$ has a linearly ordered compactification $Z$ because  every linearly ordered  space has a linearly ordered   campactfication  (see 3.12.3 (b) in \cite{engelking1989general} or Example 8.3 in \cite{nagata1985modern}). Because every linearly ordered space is monotonically normal,    $Z$ is  monotonically normal.
 Consequently, the closure of $X$ in $Z$ is both compact and monotonically normal. Therefore, the closure of $X$ in $Z$ is a monotonically normal compactification of $X$. Thus, from Theorem \ref{tartık},   \2 has a   2-tactic in $BM(X)$.

\end{proof}

\section{Open Questions}\label{Ack sorular}

The following question is related to Theorem \ref{Prop: }.

\begin{question}

Let $\mathcal{B}$ be an arbitrary Noetherian  base of a space $X$ and $rank(\mathcal{B})$  not a cardinal. Then, is there any  Noetherian base $\mathcal{B}'\subseteq \mathcal{B}$ such that $rank(\mathcal{B}')\leq |rank(\mathcal{B})|$?

\end{question}

Where $\kappa$ is a cardinal number, in the following two questions: $\kappa^{+1}=\kappa^+$ and for $n\geq 2$,  $\kappa^{+n}=(\kappa^{+(n-1)})^+$.

The following question is related to the Galvin's theorem.  

\begin{question}\label{soru}
	If \2 has a winning strategy in  $BM(X)$ where $X$ has a $\pi$-base $\mathcal{B}$ with the following ($\Join$)-property, does \2 have an  $(n+2)$-tactic in $BM(X)$? 
	
	($\Join$) For any $U\in pur(\mathcal{B})$  there exists a disjoint family 
	$\mathcal{A}_U\subseteq \tau^*(U)$  such that $|\mathcal{A}_U|^{+n}\geq  |\{B \in\mathcal{B}: U \subseteq B \}|$. 	
\end{question}

The following question is related to Theorem \ref{Teorem winning ten 2-tactige ana sonuc 2}

\begin{question}
If \2 has a winning strategy in $BM(X)$ where $X$ has a $\pi$-base $\mathcal{B}$ with the following ($\star_n$)-property, does \2 have an  $(n+2)$-tactic in $BM(X)$?

($\star_n$) $pur(\mathcal{B})=\emptyset$  or  $pur(\mathcal{B})$ is   Noetherian and has a 
Noetherian table $[pur(\mathcal{B})]$ such that
for any $B\in [pur(\mathcal{B})]$
there exists a disjoint family $\mathcal{A}_{B}\subseteq \tau^*(B)$ such that 
$|\mathcal{A}_{B}|^{+n}\geq \max\{|r(B)|,M_B\}$ where Noetherian union $pur(\mathcal{B})$ is $\bigcup_{\theta<\alpha}\mathcal{B}_\theta$ and $M_B=\sup\{|\{V\in  \mathcal{B}_\theta:B\subseteq V, o(V)\leq o(B)\}|:\theta\leq r(B)\}$. 
	
\end{question}

In this paper, we prove that Corollary \ref {Teorem winning ten 2-tactige ana sonuc} implies Theorem \ref{Teorem winning ten 2-tactige ana sonuc 2}, and    Galvin's Theorem implies  Theorem \ref{Teorem winning ten 2-tactige ana sonuc 2}. Now, we pose some questions about these relationships. We conjecture that the answers to the following first three questions are affirmative; in other words, we think that  Theorem \ref{Teorem winning ten 2-tactige ana sonuc 2} does not imply Galvin's Theorem, and Corollary \ref {Teorem winning ten 2-tactige ana sonuc}  and Galvin's Theorem are distinct.

\begin{question}\label{Galvin}
	
Is there any space $X$ having a $\pi$-base which has  ($\star$) or ($\star\star$)-conditions of Theorem \ref{Teorem winning ten 2-tactige ana sonuc 2}, but $X$  has no  $\pi$-base  $\mathcal{B}$ which has the following ($\bullet$)-property?

($\bullet$) $pur(\mathcal{B})=\emptyset$ or for any $U\in pur(\mathcal{B})$ there exists a disjoint family 
$\mathcal{A}_U\subseteq \tau^*(U)$  such that $|\mathcal{A}_U|\geq  |\{B \in\mathcal{B}: U \subseteq B \}|$. 

\end{question}

The following questions are related to Corollary \ref{Teorem winning ten 2-tactige ana sonuc} and the Galvin's theorem.

\begin{question}
	Is there any space $X$ having a $\pi$-base which has ($\dagger$) or ($\dagger\dagger$)-conditions of Corollary \ref{Teorem winning ten 2-tactige ana sonuc}, but $X$  has no  $\pi$-base  $\mathcal{B}$ which has  ($\bullet$)-property in  Question \ref{Galvin}?
	
\end{question}

\begin{question}
	Is there any space $X$ having a $\pi$-base  which has ($\bullet$)-property in  Question \ref{Galvin}, but $X$  has no  $\pi$-base  which has   ($\dagger$) or ($\dagger\dagger$)-conditions of Corollary \ref{Teorem winning ten 2-tactige ana sonuc}?
	
\end{question}

The following can be thought as a project, rather than a specific question.
\begin{question}\label{de}
	Let $X$ be space and \2 has a winning strategy in $BM(X)$. Does there exist a "property" that characterizes the following equivalence:
	
	\2 has a 2-tactic in $BM(X)$ if and only if there exists a $\pi$-base for $X$ satisfying this "property"?.
	
Is there a corresponding set-theoretic statement for this "property", and what kind of properties does this statement have?
\end{question}

\bibliographystyle{plain}

\end{document}